\newtheorem{proposition}{Proposition}[section]
\newtheorem{theorem}[proposition]{Theorem}
\newtheorem{corollary}[proposition]{Corollary}
\newtheorem{lemma}[proposition]{Lemma}
\theoremstyle{definition}
\newtheorem{definition}[proposition]{Definition}
\newtheorem{remark}[proposition]{Remark}
\numberwithin{equation}{section}
\def\R{\mathbb R}
\def\Dx{\Delta_x}
\def\Nx{\nabla_x}
\def\Dt{\partial_t}
\def\({\left(}
\def\){\right)}
\def\eb{\varepsilon}
\def\Bbb{\mathbb}
\def\qand{\quad\mbox{and}\quad}
\title[]{Global well-posedness in  uniformly local spaces for the Cahn-Hilliard equation in $\R^3$}
 \author[]
{Jon Pennant and Sergey Zelik}
\thanks{
The authors would like to thank Alain Miranville for fruitful
comments.}
\address{Department of mathematics,
\newline\indent
University of Surrey Guildford,
\newline\indent  GU2 7XH United Kingdom }
\email{J.Pennant@surrey.ac.uk}
\email{S.Zelik@surrey.ac.uk}
\subjclass[2000]{35B41, 35L05, 74K15} \keywords{Cahn-Hilliard equation, unbounded domains, infinite-energy solutions}
\begin{document}

\begin{abstract}
We study  the infinite-energy solutions of the Cahn-Hilliard equation in the whole 3D space
in uniformly local phase spaces. In particular, we establish the global existence of solutions  for the case of regular potentials of
arbitrary polynomial growth and for the case of sufficiently strong singular potentials. For these cases, the uniqueness and further regularity of the obtained solutions are proved as well. We discuss also  the analogous problems for the case of the so-called Cahn-Hilliard-Oono equation
where, in addition, the dissipativity of the associated solution semigroup is established.
\end{abstract}

\maketitle
\tableofcontents
\section{Introduction}\label{s.int}
We study the classical Cahn-Hilliard equation
\begin{equation}\label{00.ch}
\Dt u=\Dx(-\Dx u+f(u)+g)
\end{equation}
considered in the whole space $x\in\R^3$.
\par
It is well-known that  the Cahn-Hillard (CH) equation is central for
the material sciences and extensive amount of papers are devoted to
the mathematical analysis of this equation and various of its
generalizations.  In
particular, in the case where the underlying domain $\Omega$ is bounded,
 the analytic and dynamic properties of the CH equations are relatively well-understood including the questions of well-posedness
 (even in the case of singular potentials $f$) and dissipativity, smoothness, existence of global and exponential
 attractors, upper and lower bounds for the dimension, etc.
 We mention here only some contributors, namely, \cite{CH,D,EGZ,EMZ1,EGW,GSZ,GSZ1,GPS,Ka,EK1,MZ3,MZ2,MZ1,No1,No,WW}
 (see also the references therein).
\par
The situation in the case where the underlying domain is unbounded
is essentially less clear even in the case of finite-energy
solutions. Indeed, the key feature of the CH equation in bounded
domains which allows to build up a reasonable theory (especially in
 the case of rapidly growing or singular nonlinearities) is the possibility to obtain good estimates in the negative Sobolev space $W^{-1,2}(\Omega)$
  and, to this end, one should use the inverse Laplacian $(-\Dx)^{-1}$.
But, unfortunately this operator is not good in unbounded domains
(in particular, does not map $L^2(\R^3)$ to $L^2(\R^3)$) and this
makes the greater part of the analytic tools developed for the CH
equation unapplicable to the case of unbounded domains. One exception is the case of cylindrical domains $\Omega$ and {\it Dirichlet} boundary conditions where the inverse Laplacian is well-defined and the theory of {\it finite-energy} solutions can
be built straightforwardly combining the usual Cahn-Hilliard
technique and the technique of weighted estimates (see
\cite{Ab2,BV,MZ,EZ,Z1}). The {\it infinite-energy} solutions (including regular and singular potentials, uniqueness, dissipativity and attractors, etc.) for that case have been recently studied in \cite{EKZ}, see also \cite{Bo}.
\par
 However,
despite the general theory of dissipative PDEs in unbounded domains
which seems highly developed now-a-days (see the surveys \cite{MZ}
and \cite{Ba} and references therein),  behavior of
solutions of the CH equation {\it in the whole space} remains badly understood. Indeed, to
the best of our knowledge, only the {\it local} results in this
direction are available in the literature, like the nonlinear
(diffusive) stability of relatively simple equilibria (e.g.,
kink-type solutions), relaxation rates to that equilibria,
asymptotic expansions in a small neighborhood of them, etc., see
\cite{BKT,TKT} and references therein (we also mention \cite{DKS} where some results on the long-time behavior of finite-energy solutions are obtained for the case of so-called viscous CH equations).
\par
 The situation becomes even
worse for more general infinite-energy solutions (e.g., for the
initial data belonging to $L^\infty(\R^n)$ only). In this case, even
the global
 existence of a solution was not known for the simplest cubic nonlinearity $f(u)=u^3-u$ (again, to the best of our knowledge) and the
 boundedness of  solutions as $t\to\infty$ is established only if $f(u)$ is {\it linear} outside of a compact set in $\R$, see \cite{CM}.
 \par
The aim of the present paper is exactly to give the positive answer on the question of global well-posedness of the CH equations in $\R^3$ in the class of uniformly local spaces. Although our estimates are not strong enough to establish the global boundedness
of solutions in $L^\infty(\R^3)$ like in \cite{CM}, we are able to treat a much more general class of nonlinearities including the case of regular potentials with arbitrary polynomial growth rate as well as the case of sufficiently strong singular potentials. Moreover, our upper bounds on solutions show that they can grow at most polynomially in time, see Theorems \ref{Th0.1} and \ref{Th.critical} below.
 \par
 In addition, we also study the so-called Cahn-Hilliard-Oono equation
\begin{equation}\label{0.ch}
\Dt u=\Dx(-\Dx u+f(u)+g)-\lambda u,
\end{equation}
where the extra term $\lambda u$, $\lambda>0$ models the non-local long-ranged interactions, see \cite{Oop, Miro} for more details.
As we will show below, the presence of this extra term drastically changes the situation with regard to the long-time behavior
 and allows us to obtain not only the global boundedness of solutions, but also their  {\it dissipativity}, and existence of a locally-compact global attractor, etc. Note that these results hold for all polynomial nonlinearities (satisfying the natural dissipativity assumptions)
 as well as for sufficiently strong singular potentials. Although we cannot cover the case of logarithmic nonlinearities,
  we are able to treat equation \eqref{0.ch} with
\begin{equation}\label{0.fpol}
f(u)\sim\frac{u}{(1-u^2)^\gamma}-Ku
\end{equation}
and $\gamma\ge5/3$.
\par
The paper is organized as follows.
\par
The definitions and key facts on the weighted and uniformly local Sobolev spaces which will be used throughout of the paper are briefly discussed in Section \ref{s1}.
\par
The key a priori estimate which gives at most polynomial growth rate in time for the infinite-energy solutions of \eqref{00.ch} is derived in Section \ref{s2} for the case of regular potentials. The uniqueness and further regularity of these solutions are verified in Section \ref{s3}.
\par
Section \ref{s4} is devoted to the extension of the above results to the Cahn-Hilliard-Oono equation. In particular, the dissipative estimate for the solutions of that equation is presented there.
\par
Finally, the Cahn-Hilliard and Cahn-Hilliard-Oono  equations with singular potentials are considered in Section \ref{s5}.

\section{Preliminaries: weighted and uniformly local spaces}\label{s1}
In this section, the definitions and key properties of the weighted and uniformly local Sobolev spaces needed for what follows
are briefly recorded, see \cite{EZ,MZ,Z3,Z4} for a more detailed exposition. We start with introducing the class of suitable weight functions.
\par
A function $\phi \in L^\infty_{loc}(\R^3)$ is called a weight function
with exponential rate of growth ($\nu>0$) if the conditions
\begin{equation}\label{2.weightexp}
\phi(x)>0\qand \phi(x + y) \leq C e^{\nu|x|} \phi(y)\,,
\end{equation}
are satisfied for every $x, y \in \R^3$.
\par
Any  weight function with growth rate $\nu$ also satisfies
$$
\phi(x + y) \geq C^{-1} e^{-\nu|x|} \phi(y)
$$
for all $x,y \in \R^3$.  Important
examples of weight functions with growth rate $\nu$, are
\begin{equation}
 \label{weight}
\phi_\eb(x) = \frac{1}{(1 + |\eb x|^2)^\frac{\gamma}{2}}\qand
\varphi_\eb(x)=e^{-\sqrt{|\eb x|^2+1}}\,,
\end{equation}
where $\gamma\in\R$ is arbitrary and $\eb < \nu$ in the second example. Crucial for what follows is the fact
that these functions satisfy \eqref{2.weightexp} uniformly with respect to $\eb\to0$. Moreover, if $\phi(x)$ satisfies \eqref{2.weightexp}, then the shifted weight function $\phi(x-x_0)$, $x_0\in\R^3$, also satisfies \eqref{2.weightexp} with the same constants $C$ and $\nu$.
\par
It is not difficult to check that the second weight function satisfies
\begin{equation}\label{1.weight}
|D^N_x\varphi_{\eb}(x)|\le C_N\eb^N\varphi_{\eb}(x)
\end{equation}
for all $N\in\Bbb N$ and the constant $C_N$ is independent of $\eb\to0$ (here and below $D^N_x$ stands for the collection of all partial derivatives of order $N$ with respect to $x$). In addition, the first weight function $\phi_\eb(x)$ satisfies the improved version of \eqref{1.weight}
\begin{equation}\label{0.6}
|D^N_x\phi_{\eb}(x)|\le C_N\eb^N [\phi_{\eb}(x)]^{1+N/\gamma}\le C_N'\eb^N\phi_{\eb}(x),
\end{equation}
where $C'_N$ is also independent of $\eb\to0$. Furthermore, to verify the dissipativity  of the Cahn-Hilliard-Oone equation, we will need to consider the weight functions $\phi_{\eb(t)}(x)$ where the parameter $\eb=\eb(t)$ depends explicitly on time. In this case,
\begin{equation*}
\partial_t\phi_{\eb(t)}(x)=\eb'(t)\frac{x}{\eb(t)}\cdot\nabla_x\phi_{\eb(t)}(x)=\gamma\frac{\eb'(t)}{\eb(t)}\phi_{\eb(t)}(x)\frac{|\eb(t)x|^2}{1+|\eb(t) x|^2}
\end{equation*}
and, therefore,
\begin{equation}\label{ebt}
|\partial_t\phi_{\eb(t)}(x)|\le \gamma\frac{|\eb'(t)|}{\eb(t)}\phi_{\eb(t)}(x),\ \ x\in\R^3.
\end{equation}
We are now ready to introduce the weighted and uniformly local spaces which will be used throughout the paper.
\begin{definition}\label{Def.spaces} For any $1\le p\le\infty$, the uniformly local Lebesgue space $L^p_b(\R^3)$ is defined as follows:
\begin{equation}\label{1.ul}
L^p_b(\R^3):=\left\{u\in L^p_{loc}(\R^3) : \|u\|_{L^p_b}:=\sup_{x_0\in\R^3}\|u\|_{L^p(B^1_{x_0})}<\infty\right\},
\end{equation}
 where $B^R_{x_0}$ stands for the $R$-ball in $\R^3$ centered at $x_0$.
\par
Furthermore, for any  weight function
$\phi(x)$  with exponential rate of growth, we define the weighted Lebesgue spaces:
\[
L^p_\phi(\R^3) = \left\{ u \in L^p_{loc}(\R^3) : \|u\|_{L^p_\phi}:=\(\int_{\R^3} \phi(x) |u(x)|^p dx\)^{1/p} < \infty\right\}.
\]
Analogously, the weighted  ($W^{l,p}_\phi(\R^3)$) and uniformly local ($W^{l,p}_b(\R^3)$)  are defined as subspaces of $\mathscr{D}'(\R^3)$ of distributions whose derivatives up to order $l$  belong to  $L^p_\phi(\R^3)$ or $L^p_b(\R^3)$ respectively (this works for natural $l$ only, the weighted Sobolev spaces with fractional/negative number of derivatives can be also defined in a standard way using interpolation/duality).
\end{definition}
\par
The following  proposition gives the technical tool for estimating the uniformly local norms of solutions using the energy estimates in weighted Sobolev spaces.

 \begin{proposition}\label{Prop.weighted} Let $\phi$ be a weight function with exponential growth such that $\|\phi\|_{L^1(\R^3)}<\infty$ and let $u\in L^p_b(\R)$ for some $1\le p<\infty$. Then, $u\in L^p_\phi(\R^3)$ and
 \begin{equation}\label{2.un-w}
 \|u\|_{L^p_\phi}\le C\|\phi\|_{L^1}^{1/p}\|u\|_{L^p_b}\,,
 \end{equation}
where the constant $C$ depends only on $p$ and the constants $C$ and $\nu$ in \eqref{2.weightexp} (and is independent of the concrete choice of the functions $\phi$ and $u$). Moreover,
\begin{equation}\label{2.w-un}
\|u\|_{L^2_b}\le C\sup_{x_0\in\R^3}\|u\|_{L^2_{\phi(\cdot-x_0)}}\,,
\end{equation}
where $C$ is also independent of the concrete choice of $u$ and $\phi$.
\end{proposition}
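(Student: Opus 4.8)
The plan is to deduce both bounds from the two-sided control of $\phi$ on unit balls that \eqref{2.weightexp} provides, exchanging the order of integration rather than covering $\R^3$ by a lattice of balls. For \eqref{2.un-w} I would introduce the auxiliary average $I:=\int_{\R^3}\|u\|^p_{L^p(B^1_y)}\,\phi(y)\,dy$. Writing $\|u\|^p_{L^p(B^1_y)}=\int_{\R^3}\mathbf 1_{\{|x-y|\le1\}}\,|u(x)|^p\,dx$ and applying Tonelli's theorem (all integrands are nonnegative), one rewrites $I=\int_{\R^3}|u(x)|^p\(\int_{B^1_x}\phi(y)\,dy\)\,dx$.

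Next I would estimate $I$ from both sides. From above, $\|u\|^p_{L^p(B^1_y)}\le\|u\|^p_{L^p_b}$ for every $y$, so $I\le\|u\|^p_{L^p_b}\,\|\phi\|_{L^1}$. From below, the inequality $\phi(z+x)\ge C^{-1}e^{-\nu|z|}\phi(x)$ recorded just after \eqref{2.weightexp}, used for $|z|\le1$, yields $\int_{B^1_x}\phi(y)\,dy\ge C^{-1}e^{-\nu}|B^1|\,\phi(x)$, whence $I\ge c_1\|u\|^p_{L^p_\phi}$ with $c_1:=C^{-1}e^{-\nu}|B^1|$ depending only on $C$, $\nu$. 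Combining the two estimates gives $\|u\|^p_{L^p_\phi}\le c_1^{-1}\|\phi\|_{L^1}\,\|u\|^p_{L^p_b}$, which is \eqref{2.un-w} after extracting $p$-th roots; since the right-hand side is finite, this also shows $u\in L^p_\phi(\R^3)$.

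For the converse \eqref{2.w-un} I would argue pointwise in $x_0$. Keeping only the contribution of the central ball, $\|u\|^2_{L^2_{\phi(\cdot-x_0)}}\ge\int_{B^1_{x_0}}\phi(x-x_0)\,|u(x)|^2\,dx$, and on $B^1_{x_0}$ the same lower bound (this time with base point $0$) gives $\phi(x-x_0)\ge C^{-1}e^{-\nu}\phi(0)$. Hence $\|u\|^2_{L^2_{\phi(\cdot-x_0)}}\ge C^{-1}e^{-\nu}\phi(0)\,\|u\|^2_{L^2(B^1_{x_0})}$; taking square roots and then the supremum over $x_0\in\R^3$ produces \eqref{2.w-un}. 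The argument is uniform in $x_0$ because every shifted weight $\phi(\cdot-x_0)$ satisfies \eqref{2.weightexp} with the same constants $C$ and $\nu$.

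The step to handle with care is the asserted independence of the constants from $\phi$. In \eqref{2.un-w} this is automatic, as $c_1$ depends only on $C$, $\nu$ and $|B^1|$. In \eqref{2.w-un}, however, the constant I obtain equals $(Ce^{\nu}/\phi(0))^{1/2}$, and this is unchanged under the rescaling $\phi\mapsto\lambda\phi$ only once a normalization has been fixed, since \eqref{2.weightexp} is itself scale-invariant and therefore cannot determine $\phi(0)$. I would close this gap by restricting to normalized weights — as for the model weights \eqref{weight}, where $\phi(0)$ is of order one — or, equivalently, by recording a uniform positive lower bound for $\phi$ on the unit ball about the origin; this is exactly what makes the constant genuinely independent of the particular $\phi$ within the class used in the paper.
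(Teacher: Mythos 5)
Your argument is correct. Note that the paper does not prove this proposition at all --- it simply refers to \cite{EZ} and \cite{Z4} --- so there is no in-text proof to compare against; the standard argument in those references covers $\R^3$ by a lattice of unit balls $\{B^1_{x_n}\}$, bounds $\sup_{B^1_{x_n}}\phi$ by $C e^{\nu}\phi(x_n)$ via \eqref{2.weightexp}, and then compares the lattice sum $\sum_n\phi(x_n)$ with $\|\phi\|_{L^1}$ using the same two-sided bound. Your Tonelli computation with $I=\int\|u\|^p_{L^p(B^1_y)}\phi(y)\,dy$ is the continuous counterpart of that discretization: it trades the lattice bookkeeping for a single exchange of integrals and yields the same constant $c_1^{-1/p}$ depending only on $p$, $C$ and $\nu$, which is arguably cleaner. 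Your closing observation about \eqref{2.w-un} is also a genuine and correct catch, not a defect of your proof: since \eqref{2.weightexp} and the hypothesis $\|\phi\|_{L^1}<\infty$ are both invariant under $\phi\mapsto\lambda\phi$ while the two sides of \eqref{2.w-un} scale differently, no constant independent of $\phi$ can exist without a normalization such as $\phi(0)\ge c>0$; this is implicitly satisfied by the model weights \eqref{weight} (uniformly in $\eb$ and in the shifts $x_0$), which is the only setting in which the paper invokes \eqref{2.w-un}, so your added hypothesis is exactly the right repair and costs nothing in the applications.
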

For the proof of this proposition, see \cite{EZ} or \cite{Z4}.
\par
We will mainly use estimate \eqref{2.un-w} in the situation where $\phi=\phi_\eb$ is one of the special weight functions of \eqref{weight} and $\eb>0$ is a small parameter. In this case, for $\gamma>3$, we have $\|\phi_\eb\|_{L^1}\sim \eb^{-3}$ and \eqref{2.un-w} reads
\begin{equation}\label{1.mainest}
\|v\|_{L^p_{\phi_\eb}}\le
C\eb^{-3/p}\|v\|_{L^p_b},
\end{equation}
where the constant $C>0$ is independent of $\eb\to0$.
\par
In the sequel, we will need also the proper spaces for functions of time with values in some uniformly local space, say $u:[0,T]\to W^{l,p}_b(\R^3)$.
With a slight abuse of notations, we denote by $L^q_b([0,T],W^{l,p}(\R^3))$ the subspace of distributions generated by the following norm:
\begin{equation}\label{1.stul}
\|u\|_{L^q_b([0,T],W^{l,p}_b)}:=\sup_{(t,x_0)\in[0,T-1]\times\R^3}\|u\|_{L^q([t,t+1],W^{l,p}(B^1_{x_0}))}.
\end{equation}
\begin{remark} Note that a more standard definition of $L^q_b([0,T],W^{l,p}_b(\R^3))$ would be via the following norm:
\begin{equation}\label{bad}
\|u\|_{L^q_b([0,T],W^{l,p}_b(\R^3))}:=\sup_{t\in[0,T-1]}\(\int_{t}^{t+1}\|u(t)\|^q_{W^{l,p}_b}\,dt\)^{1/q}
\end{equation}
which {\it differs} from \eqref{1.stul} by the changed order of supremum over $x_0\in\R^3$ and integral in time and is slightly {\it stronger} than \eqref{1.stul}. The main reason to use \eqref{1.stul} instead of \eqref{bad} is that the first norm can be estimated through the associated weighted space analogously to \eqref{2.w-un} which is essential since all estimates in uniformly local spaces are usually obtained with the help of the associated weighted estimates. Thus, exactly the first norm gives the natural and useful generalization of the space $L^q([0,T],W^{l,p}(\R^3))$ to the uniformly local case and the second norm \eqref{bad} which requires more delicate additional arguments to be properly estimated has only a restricted interest.
\end{remark}

\section{The key estimate and global existence}\label{s2}
In this section, we derive the key a priori weighted estimate for  the solutions of the  Cahn-Hilliard equation
\begin{equation}\label{CH}
\Dt u=\Dx\mu,\ \ \mu:=-\Dx u+f(u)+g(x),\ \ u\big|_{t=0}=u_0
\end{equation}
which allows to verify the existence of global in time solutions in the proper uniformly local Sobolev spaces (the uniqueness of that solutions will be discussed in the next section).  Here $u=u(t,x)$ and $\mu=\mu(t,x)$ are unknown order parameter and chemical potential respectively, $\Dx$ is a Laplacian with respect to $x$ and $f$ and $g$ are given nonlinearity and external forces respectively.
\par
We  assume that the nonlinearity $f(u)=f_0(u)+\psi(u)$ satisfies
\begin{equation}\label{0.1}
\begin{cases}
1.\ f'_0(u)\ge 1,\ \ f_0(0)=0,\\
2.\ |\psi(u)|+|\psi'(u)|\le C,\\
3.\ \ |f(u)|\le \alpha|F(u)|+C,\ \ F(u):=\int_0^uf(v)\,dv,
\end{cases}
\end{equation}
where $\alpha>0$. In particular, all polynomials of odd order with positive first coefficient satisfy these assumptions, in addition, some potentials of exponential growth rate are also allowed by these assumptions. Note that, due to the first and second assumptions of \eqref{0.1}, we have
$$
F(u)\ge \beta|u|^2-C,\ \ u\in\R
$$
for some positive constants $\beta$ and $C$.
\par
 We also assume that the external forces $g\in L^6_b(\R^3)$ and the initial data
$u_0$ belongs to the space $\Phi_b$ defined as follows:
\begin{equation}\label{0.2}
\Phi_b:=\{u\in W^{1,2}_b(\R^3),\ \ F(u)\in L^1_b(\R^3)\}.
\end{equation}
We say that $u$ is a solution of \eqref{CH} on the time interval $t\in[0,T]$ if
\begin{equation}\label{0.3}
\begin{cases}
1.\ \  u(t)\in\Phi_b,\ t\in[0,T];\\
2. \ \ u(t)\in C([0,T],L^2_{loc}(\R^3));\\
3.\ \ \mu\in L^2_b([0,T], W^{1,2}_b(\R^3))
\end{cases}
\end{equation}
and equation \eqref{CH} is satisfied in the sense of distributions. Note that the third assumption of \eqref{0.3}, together with the maximal regularity
theorem for the semilinear equation
$$
-\Dx u+f(u)=\mu-g
$$
in the uniformly local space $L^6_b(\R^3)$, imply that
$$
u\in L^2_b([0,T],W^{2,6}_b(\Omega)),\ \ f(u)\in L^2_b([0,T],L^6_b(\R^3)).
$$
The main result of this section is the following theorem.

\begin{theorem}\label{Th0.1} Let the above assumptions hold. Then, for every $T>0$, equation \eqref{CH} possesses at least one solution in the sense of \eqref{0.3} which satisfies the following estimate:
\begin{multline}\label{0.4}
\|u(t)\|_{W^{1,2}_b}^2+\|F(u(t))\|_{L^1_b}+\|\Nx\mu\|_{L^2_b([0,t]\times\R^3)}^2\le\\\le C(1+ t^4)\(1+\|g\|_{L^6_b}^2+ \|u_0\|_{W^{1,2}_b}^2+\|F(u_0)\|_{L^1_b}\)^{5/2},
\end{multline}
where the constant $C$ is independent of $u$, $g$ and $t$.
\end{theorem}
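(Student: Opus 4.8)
The plan is to derive the a priori estimate \eqref{0.4} via a weighted energy argument, using the special weight $\phi_\eb$ of \eqref{weight} with $\gamma>3$, and then to transfer the weighted bound to the uniformly local norm with the help of Proposition \ref{Prop.weighted}; the global existence of a solution then follows by a standard Galerkin/approximation scheme combined with the uniform (in the approximation parameter) a priori bound. The crucial point is that the bad operator $(-\Dx)^{-1}$ is never used: instead we work directly with the natural Cahn-Hilliard energy and control the nonlocal-in-space effects through the smallness of the weight derivatives recorded in \eqref{0.6}.

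The main computation I would carry out is the following. Multiply the equation $\Dt u=\Dx\mu$ by $\phi_\eb\mu$ and integrate over $\R^3$. The left-hand side is $\int\phi_\eb\mu\,\Dt u$; using $\mu=-\Dx u+f(u)+g$, this produces the time derivative of the weighted energy
\begin{equation*}
\mathcal E_\eb(u):=\int_{\R^3}\phi_\eb\(\tfrac12|\Nx u|^2+F(u)+gu\)\,dx
\end{equation*}
up to commutator terms in which one derivative falls on $\phi_\eb$. The right-hand side $\int\phi_\eb\mu\,\Dx\mu=-\int\phi_\eb|\Nx\mu|^2-\int(\Nx\phi_\eb\cdot\Nx\mu)\mu$ gives the good dissipative term $-\|\Nx\mu\|^2_{L^2_{\phi_\eb}}$ plus another commutator. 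By \eqref{0.6} every commutator carries a factor $\eb$, so all the error terms are bounded by $C\eb$ times quantities controlled by the energy and the dissipation; choosing $\eb$ small lets us absorb the dissipation-type errors into the good term $\|\Nx\mu\|^2_{L^2_{\phi_\eb}}$. This yields a differential inequality of the form $\tfrac{d}{dt}\mathcal E_\eb+\tfrac12\|\Nx\mu\|^2_{L^2_{\phi_\eb}}\le C\eb\,(\text{energy-controlled terms})$, where assumption 3 of \eqref{0.1} together with $F(u)\ge\beta|u|^2-C$ ensures that $\|u\|^2_{L^2_{\phi_\eb}}$ and $\|f(u)\|$-type quantities are dominated by $\mathcal E_\eb$ plus lower order. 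I would also need to control the zero-mean/spatial-average obstruction that is invisible on bounded domains: here the gradient bound $\|\Nx\mu\|_{L^2_{\phi_\eb}}$ does \emph{not} by itself control $\mu$ in $L^2_{\phi_\eb}$, so one extracts an estimate for $\mu$ itself by a weighted Poincaré-type/Nirenberg argument against the weight, paying another harmless factor of $\eb$.

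The hard part, and the reason the estimate degrades to the polynomial-in-time factor $C(1+t^4)$ and the power $5/2$ on the data, is precisely the handling of these commutator terms and the absence of a uniform control on $\mu$. A naive Gronwall argument would give exponential growth; the improvement to polynomial growth comes from optimizing the free parameter $\eb$ at each fixed time. Concretely, after integrating the differential inequality one obtains a bound in which $\eb$ appears both through the energy-error factors and through the conversion constant $\eb^{-3/p}$ of \eqref{1.mainest} when passing from the weighted norm back to $\|\cdot\|_{L^p_b}$; balancing $\eb$ against $t$ (roughly $\eb\sim t^{-1}$) trades the exponential growth for the stated polynomial rate. I would carry this out by first proving the inequality for each fixed small $\eb$, then taking the supremum over all spatial shifts $\phi_\eb(\cdot-x_0)$ and applying \eqref{2.w-un} to recover the uniformly local norm, and finally optimizing $\eb$ to produce the exponents $t^4$ and $5/2$. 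The remaining steps — Galerkin approximation, passage to the limit using the uniform bound and the local compactness afforded by $C([0,T],L^2_{loc})$, and the recovery of $u\in L^2_b([0,T],W^{2,6}_b)$ via the maximal regularity statement already quoted after \eqref{0.3} — are routine once the a priori estimate is in hand.
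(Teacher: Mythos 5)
Your overall architecture matches the paper's: multiply by $\phi_\eb\mu$, exploit that every commutator carries a factor of $\eb$ via \eqref{0.6}, take the supremum over shifted weights $\phi_\eb(\cdot-x_0)$ using \eqref{2.w-un}, and optimize $\eb$ against $T$ at the end. But there is a genuine gap at the heart of the argument: you claim the commutator errors are ``bounded by $C\eb$ times quantities controlled by the energy,'' leading to a \emph{linear} differential inequality for $\mathcal E_\eb$. That step fails. The commutator $\tfrac12(|\mu|^2,\Dx\phi_\eb)$, after expanding $\mu=-\Dx u+f(u)+g$, produces the term $\eb^2(\phi_\eb^{7/5},|f(u)|^2)$, which is \emph{quadratic} in $f(u)$, whereas the energy only controls $F(u)$ in $L^1_{\phi_\eb}$ --- via assumption 3 of \eqref{0.1} this gives $f(u)$ in $L^1_{\phi_\eb}$, one power short. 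No choice of small $\eb$ and no weighted Poincar\'e inequality for $\mu$ closes this by itself. The paper needs two additional ingredients you do not have: (i) an $L^6_{\phi_\eb^3}$ bound on $f(u)$ in terms of $\|\Nx(\phi_\eb^{1/2}\mu)\|_{L^2}$, obtained by testing the elliptic equation $-\Dx u+f_0(u)=\mu-\psi(u)-g$ with $\phi_\eb^3 f_0(u)|f_0(u)|^4$ and using monotonicity of $f_0$ plus the Sobolev embedding (Lemma \ref{Lem0.1}), together with a weighted $H^2$ estimate for $u$ (Lemma \ref{Lem0.2}) to absorb the $|\Dx u|^2$ part of the same commutator; and (ii) a H\"older interpolation of $(\phi_\eb^{7/5},|f(u)|^2)$ between $L^1_{\phi_\eb}$ and $L^6_{\phi_\eb^3}$, which converts the problematic term into $C\eb^5(\phi_\eb,|f(u)|)^2$ plus a piece absorbed by the new dissipation term $(\phi_\eb^3,|f(u)|^6)^{1/3}$.

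The consequence is that the correct differential inequality is of Riccati type, $\frac d{dt}V_\eb\le \eb^2V_\eb^2+C(\|g\|_{L^6_b}^2+1)$ with $V_\eb=\eb^3E_{\phi_\eb}$, not linear. This also means your explanation of where the exponents come from is off: the issue is not trading exponential Gronwall growth for polynomial growth, but preventing finite-time blow-up of the quadratic inequality, which forces the choice $\eb\sim\bigl[(T+1)(1+\|g\|_{L^6_b}^2+\|u_0\|_{\Phi_b})^{1/2}\bigr]^{-1}$; the factor $(1+t^4)$ and the power $5/2$ then arise mechanically from $E_{\phi_\eb}\le\eb^{-3}V_\eb$ with this specific $\eb$. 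Without the $L^6$ estimate on $f(u)$ and the interpolation step, your sketch cannot produce these exponents, nor close the a priori bound at all for nonlinearities of arbitrary polynomial growth.
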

\begin{proof} We will give below only the formal derivation of the key a priori estimate \eqref{0.4}. The existence of solutions can be then deduced
using the approximation of the infinite energy data $(u_0,g)$ by the finite energy functions $(u_0^n,g^n)$ (for which the existence and regularity of
a solution is immediate and the derivation of \eqref{0.4} is justified) and passing to the limit in a local topology. Since these arguments are standard, see e.g., \cite{EZ,EKZ,Z1}, we leave them to the reader and concentrate ourselves on the derivation of the key estimate.
\par
Let us use the polynomial weight functions
\begin{equation}\label{0.5}
\phi(x):=\frac1{(1+|x|^2)^{5/2}},\ \ \phi_{\eb,x_0}(x):=\phi(\eb(x-x_0)),\ \eb>0,\ \ x_0\in\R^3.
\end{equation}
which corresponds to \eqref{weight} with $\gamma=5$ and satisfies the assumption $\phi_\eb\in L^1(\R^3)$. Then, these weights satisfy estimate \eqref{0.6} uniformly also with respect to $x_0\in\R^3$.
\par
We now multiply equation \eqref{CH} by $\phi_\eb\mu=\phi_{\eb,x_0}\mu$ and integrate over $x$. Then, after  straightforward computations, we get
\begin{multline}\label{0.7}
\frac d{dt}\((F(u),\phi_\eb)+\frac12(|\Nx u|^2,\phi_\eb)+(g,\phi_\eb u)\)+(|\Nx\mu|^2,\phi_\eb)=\\=\frac{1}{2}(|\mu|^2,\Dx\phi_\eb)+(\Nx\mu,\Nx(\Nx\phi_\eb\cdot\Nx u)).
\end{multline}
In order to estimate the left-hand side of this equation, we need the following two lemmas.
\begin{lemma}\label{Lem0.1} Let the above assumptions hold. Then the following inequality holds:
\begin{equation}\label{0.8}
(\phi_\eb^3,|f(u)|^6)\le  C_1\|\Nx(\phi_\eb^{1/2}\mu)\|_{L^2}^6+C_2\eb^{-3}(1+\|g\|^6_{L^6_b}),
\end{equation}
where the constants $C_i$ are independent of $\eb$.
\end{lemma}
\begin{proof} We rewrite the equation for $\mu$ as follows
\begin{equation}\label{0.mu}
-\Dx u+f_0(u)=\mu-\psi(u)-g
\end{equation}
and multiply it by $\phi_{\eb}^3f_0(u)|f_0(u)|^4$. Then, integrating by parts and using that $f_0'(u)\ge0$, we end up with
\begin{equation}\label{0.f06}
(\phi_\eb^3,|f_0(u)|^6)\le |(\Dx(\phi_\eb^3),F_6(u))|+(\phi^3_\eb(\mu+\psi(u)-g,f_0(u)|f_0(u)|^4),
\end{equation}
where $F_6(u):=\int_0^uf_0(u)|f_0(u)|^4\,du$. We also note that, due to the monotonicity of $f_0$,
$$
|F_6(u)|\le |u||f_0(u)|^5\le|f_0(u)|^6
$$
and, therefore, due to \eqref{0.6}, the first term in the right-hand side of \eqref{0.f06} is absorbed by the left-hand side if $\eb$ is small enough.
Then, estimating the second term in the right-hand side by H\"older inequality and using that $\psi$ is bounded, we obtain
\begin{equation}\label{0.f0b}
(\phi_\eb^3,|f_0(u)|^6)\le C(\phi_\eb^3,|\mu|^6)+C(|g|^6+1,\phi_\eb^3).
\end{equation}
Using now estimate \eqref{1.mainest} with $p=1$
  together with the Sobolev inequality
\begin{equation}\label{0.h1l6}
\|v\|_{L^6}\le C\|\Nx v\|_{L^2},
\end{equation}
we end up with
$$
(\phi_\eb^3,|f_0(u)|^6)\le C\|\Nx(\phi_\eb^{1/2}\mu)\|^6_{L^2}+C\eb^{-3}(\|g\|^6_{L^6_b}+1)
$$
which implies \eqref{0.8} and finishes the proof of the lemma.
\end{proof}
\begin{lemma}\label{Lem0.2} Let the above assumptions hold. Then, the following estimate is valid:
\begin{equation}\label{0.h2h1}
(\phi_\eb,|D^2_x u|^2)\le C(\phi_\eb,|\Nx \mu|^2)+C(\phi_\eb,|\Nx u|^2)+C(\phi_\eb,|g|^2),
\end{equation}
where the constant $C$ is independent of $\eb$ and $D^2_xu$ means the collection of all second derivatives of $u$ with respect to $x$.
\end{lemma}
\begin{proof}
Indeed, multiplying \eqref{0.mu} by $-\Nx(\phi_\eb\Nx u)$, integrating by parts and using the monotonicity of $f_0(u)$ together with \eqref{0.6}, we have after standard estimates that
\begin{equation}\label{0.dh1}
(\phi_\eb,|\Dx u|^2)\le C(\phi_\eb,|\Nx \mu|^2)+C(\phi_\eb,|\Nx u|^2)+C(\phi_\eb,|g|^2),
\end{equation}
so we only need to estimate the mixed second derivatives of $u$ based on \eqref{0.dh1}. To this end, we note that
$$
(\phi_\eb,|\Dx u|^2)=\sum_{i,j}(\phi_\eb\partial_{x_i}^2u,\partial_{x_j}^2u)
$$
and, for $i\ne j$
\begin{multline*}
(\phi_\eb\partial_{x_i}^2u,\partial_{x_j}^2u)=-(\phi_\eb\partial_{x_i}u,\partial_{x_i}\partial_{x_j}^2u)-
(\partial_{x_i}\phi_\eb\partial_{x_i}u,\partial_{x_j}^2u)=(\phi_\eb,|\partial^2_{x_ix_j}u|^2)+\\+
(\partial_{x_j}\phi_\eb,\partial_{x_i}u,\partial_{x_i}\partial_{x_j}u)+
(\partial_{x_i}\phi_\eb,\partial^2_{x_ix_j}u\partial_{x_j}u)+
(\partial^2_{x_ix_j}\phi_\eb,\partial_{x_i}u\partial_{x_j}u)=\\=
(\phi_\eb,|\partial_{x_ix_j}^2u|^2)+(\partial^2_{x_ix_j}\phi_\eb,\partial_{x_i}u\partial_{x_j}u)-
\frac12(\partial^2_{x_i}\phi_\eb,|\partial_{x_j}u|^2)-\frac12(\partial_{x_j}^2\phi_\eb,|\partial_{x_i}^2u|^2)
\end{multline*}
which together with \eqref{0.dh1} and \eqref{0.6} imply \eqref{0.h2h1} and finishes the proof of the lemma.
\end{proof}
We are now ready to finish the derivation of the key estimate. To this end, we note that due to assumptions \eqref{0.1} on the nonlinearity $f(u)$, there exists a constant $C$ (independent of $\eb$) such that the function
\begin{equation}\label{0.wen}
E_{\phi_\eb}(t):=(F(u(t)),\phi_\eb)+\frac12(|\Nx u(t)|^2,\phi_\eb)+C\eb^{-3}(\|g\|^2_{L^2_b}+1)
\end{equation}
satisfies the inequalities
\begin{multline}\label{0.weq}
(|F(u(t))|,\phi_\eb)+\frac12(|\Nx u(t)|,\phi_\eb)\le E_{\phi_\eb}(t)\le\\\le (|F(u(t))|,\phi_\eb)+\frac12(|\Nx u(t)|,\phi_\eb)+C'\eb^{-3}(1+\|g\|^2_{L^2_b}),
\end{multline}
where the constant $C'$ is independent of $\eb$. Then as
$$
 0 \leq - \| \nabla_x(\phi_\eb^{1/2} \mu) \|^2_{L^2} + \frac{1}{4}(\phi_\eb^{-1} (\nabla_x \phi_\eb)^2, |\mu|^2) + \frac{1}{2}(\phi_\eb, |\nabla_x \mu|^2),
 $$
we can add it to the right hand side of \eqref{0.7} and this reads
\begin{multline}\label{0.huge}
\frac{d}{dt} E_{\phi_\eb}(t) + \frac{1}{2}(|\nabla_x \mu|^2, \phi_\eb) + \frac{1}{2}\| \nabla_x(\phi_\eb^{\frac{1}{2}} \mu) \|^2_{L^2}\le
 C(\phi_\eb^{-1}(\nabla_x \phi_\eb)^2 + \Delta_x\phi_\eb, |\mu|^2) +\\+ \frac{1}{4}(\phi_\eb, |\nabla_x \mu|^2) + C(|D^2_x \phi_\eb|^2 \ |\nabla_x u|^2, \phi_\eb^{-1}) + (|D^2_x u|^2 \ |\nabla_x \phi_\eb|^2, \phi_\eb^{-1}).
\end{multline}
Expanding $\mu = -\Delta_x u + f(u) + g$ in the first term at the right-hand side and using \eqref{0.6} together with \eqref{1.mainest}, we obtain
\begin{multline}\label{0.huge1}
\frac{d}{dt} E_{\phi_\eb}(t) + \frac{1}{4}(|\nabla_x \mu|^2, \phi_\eb) + \frac{1}{2}\| \nabla_x(\phi_\eb^{\frac{1}{2}} \mu) \|^2_{L^2}
\le\\\le C(\epsilon^2 \phi_\eb^{7/5}, |\Delta_x u|^2 + |f(u)|^2 + |g|^2)+ C(\epsilon^2 \phi_\eb^{9/5}, \ |\nabla_x u|^2) + C(|D^2_x u|^2, \epsilon^2 \phi_\eb^{7/5})\le\\\le C\eb^2(\phi_\eb^{7/5},|f(u)|^2)+C\eb^2(\phi_\eb,|\Nx u|^2)+C\eb^{-1}\|g\|^2_{L^2_b}+C\eb^2(\phi_\eb,|D^2_xu|^2).
\end{multline}
Using Lemmas  \ref{Lem0.2} and \ref{Lem0.1} to estimate the last terms into the left and right-hand sides respectively, we arrive at
\begin{multline}\label{0.better}
\frac{d}{dt} E_{\phi_\eb}(t) + \beta\(|\nabla_x \mu|^2, \phi_\eb) + \| \nabla_x(\phi_\eb^{1/2} \mu) \|^2_{L^2} + (\phi_\eb^3, |f(u)|^6)^{\frac{1}{3}}\)\le\\
\le C\eb^2(\phi_\eb, |\nabla_x u|^2) + C\eb^2(\phi_\eb^{7/5},|f(u)|^2) + C\eb^{-3}(\|g\|^6_{L^6_b} + 1),
\end{multline}
where $\beta$ is some positive constant independent of $\eb\to0$.
\par
It only remains to estimate the second term in the right-hand side of \eqref{0.better}. To this end, we will interpolate between the $L^1_{\phi_\eb}$
 and $L_{\phi_\eb^3}^6$. Namely, due to the H\"older inequality,
\begin{multline}\label{0.interp}
 \eb^2(\phi_\eb^{\frac{7}{5}},|f(u)|^2)=\eb^2([\phi_\eb|f(u)|]^{4/5},[\phi_\eb^{1/2}|f(v)|]^{6/5})\le\\\le \eb^2(\phi_\eb,|f(u)|)^{4/5}(\phi_\eb^3,|f(u)|^6)^{1/5}\le
 C\eb^5(\phi_\eb,|f(u)|)^2+\beta(\phi_\eb^3,|f(u)|^6)^{1/3}
\end{multline}
and, therefore, using the third assumption of \eqref{0.1} and \eqref{0.weq}, we finally arrive at
\begin{equation}\label{0.good}
\frac d{dt}E_{\phi_\eb}(t)+\beta\|\Nx\mu\|_{L^2_{\phi_\eb}}^2\le C\eb^2 E_{\phi_\eb}(t)+C\eb^5[E_{\phi_\eb}(t)]^2+C\eb^{-3}(\|g\|_{L^6_b}^2+1).
\end{equation}
We claim that \eqref{0.good} is sufficient to derive the key estimate \eqref{0.4} and finish the proof of the theorem. Indeed, due to \eqref{1.mainest},
\begin{equation}\label{0.ubb}
E_{\phi_\eb}(0)\le C\eb^{-3}(\|u_0\|_{W^{1,2}_b}^2+\|F(u_0)\|_{L^1_b}+\|g\|_{L^2_b}^2+1)
\end{equation}
and, using in addition that $\eb^2 y\le \eb^5y^2+\eb^{-1}$, we see that the function $V_\eb(t):=\eb^{3}E_{\phi_\eb}(t)$ solves the inequality
\begin{equation}\label{0.last}
\frac d{dt}V_\eb+\beta\eb^{3}\|\Nx \mu\|^2_{L^2_{\phi_\eb}}\le \eb^2V_\eb^2+C(\|g\|_{L^6_b}^2+1),\ V_\eb(0)\le C(1+\|g\|_{L^6_b}^2+\|u_0\|_{\Phi_b}),
\end{equation}
where the constant $C$ is independent of $\eb$ and
\begin{equation}\label{phase}
\|u_0\|_{\Phi_b}:=\|u_0\|_{W^{1,2}_b}^2+\|F(u_0)\|_{L^1_b}.
\end{equation}
Let us now fix an arbitrary $T>0$ and consider inequality \eqref{0.last} on the time interval $t\in[0,T]$ only. Assume, in addition, that the $\eb>0$ is chosen in such way that the inequality
\begin{equation}\label{0.eb}
\eb^2V^2_\eb(t)\le C(1+\|g\|^2_{L^6_b}+\|u_0\|_{\Phi_b})
\end{equation}
is satisfied for all $t\in[0,T]$. Then, from \eqref{0.last}, we conclude that
\begin{equation}\label{0.est}
V_\eb(t)\le 2C(T+1)(1+\|g\|^2_{L^6_b}+\|u_0\|_{\Phi_b}),\ \ t\in[0,T].
\end{equation}
Thus, in order to satisfy \eqref{0.eb}, we need to fix $\eb=\eb(T,u_0,g)$ as follows
\begin{equation}\label{0.ebb}
\eb:=\frac1{2(T+1)[C(1+\|g\|^2_{L^6_b}+\|u_0\|_{\Phi_b})]^{1/2}}.
\end{equation}
Then, as it is not difficult to see, inequality \eqref{0.est} will be indeed satisfied for all $t\in[0,T]$ (with $\eb$ fixed by \eqref{0.ebb}) which gives
\begin{equation}\label{0.eest}
E_{\phi_\eb}(T)\le \eb^{-3}V_\eb(T)\le C(T+1)^4(1+\|g\|^2_{L^6_b}+\|u_0\|_{\Phi_b})^{5/2}.
\end{equation}
We now recall that $\phi_\eb(x)=\phi_{\eb,x_0}$ depends on the parameter $x_0\in\R^3$ and estimate \eqref{0.eest} is {\it uniform} with respect to this parameter. Thus, taking the supremum with respect to this parameter and using \eqref{2.w-un}, we finally have
\begin{equation}\label{0.b}
\|u(T)\|^2_{W^{1,2}_b}+\|F(u(T))\|_{L^1_b}\le 2\sup_{x_0\in\R^3}E_{\phi_{\eb,x_0}}(T)\le C(T+1)^4(1+\|g\|^2_{L^6_b}+\|u_0\|_{\Phi_b})^{5/2}.
\end{equation}
Estimate \eqref{0.b} together with \eqref{0.last} (which we need for estimating the gradient of $\mu$) imply \eqref{0.4} and finishes the proof of the theorem.
\end{proof}
\begin{corollary} Under the assumptions of Theorem \ref{Th0.1}, the solution $u$ possesses the following additional regularity:
\begin{multline}\label{0.reg}
\|\Dt u\|_{L^2([0,T],W^{-1,2}(B^1_{x_0}))}^2+\|\mu\|_{L^2([0,T],L^6(B^1_{x_0}))}^2+
\|f(u)\|_{L^2([0,T],L^6(B^1_{x_0}))}^2+\\+\|u\|_{L^2([0,T],W^{2,6}(B^1_{x_0}))}^2\le C(T+1)^4(1+\|g\|^2_{L^6_b}+\|u_0\|_{\Phi_b})^{5/2},
\end{multline}
where the constant $C$ is independent of $x_0\in\R^3$, $T$ and $u_0$.
\end{corollary}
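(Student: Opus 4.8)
The plan is to obtain all four quantities directly from the dissipation terms already produced in the proof of Theorem \ref{Th0.1}, and then to convert the global \emph{weighted} bounds into the \emph{uniformly local} ball bounds by exploiting that the weight $\phi_{\eb,x_0}$ is bounded below by a universal constant on every unit ball. No new energy estimate is needed: the whole point is that the weighted energy method already controls not only $\Nx\mu$ but, through the term $\|\Nx(\phi_\eb^{1/2}\mu)\|^2_{L^2}$ in \eqref{0.better}, the full weighted $L^6$-norm of $\mu$ itself, which on an unbounded domain cannot be recovered from $\Nx\mu$ alone.

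First I would integrate inequality \eqref{0.better} (equivalently \eqref{0.good}) over $t\in[0,T]$. After the interpolation step its right-hand side is dominated by $C\eb^2 E_{\phi_\eb}+C\eb^5 E_{\phi_\eb}^2+C\eb^{-3}(\|g\|^2_{L^6_b}+1)$, and since $\eb$ has been fixed by \eqref{0.ebb} precisely so that the bound \eqref{0.b}, \eqref{0.eest} on $E_{\phi_\eb}(t)$ holds for all $t\in[0,T]$, integration yields
\[
\int_0^T\Big[(\phi_\eb,|\Nx\mu|^2)+\|\Nx(\phi_\eb^{1/2}\mu)\|_{L^2}^2+(\phi_\eb^3,|f(u)|^6)^{1/3}\Big]\,dt\le C(T+1)^4\(1+\|g\|_{L^6_b}^2+\|u_0\|_{\Phi_b}\)^{5/2},
\]
with $C$ and the bound uniform in $x_0\in\R^3$, exactly as in Theorem \ref{Th0.1}.

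Next I would pass to the unit balls. Since $\eb\le 1$ (decreasing it if necessary only improves the estimates), on $B^1_{x_0}$ one has $\phi_{\eb,x_0}(x)=(1+|\eb(x-x_0)|^2)^{-5/2}\ge 2^{-5/2}=:c$ with $c$ independent of $\eb$ and $x_0$, so $\|\Nx\mu\|^2_{L^2(B^1_{x_0})}\le c^{-1}(\phi_\eb,|\Nx\mu|^2)$ and $\|f(u)\|^2_{L^6(B^1_{x_0})}\le c^{-1}(\phi_\eb^3,|f(u)|^6)^{1/3}$. For $\mu$ I would apply the Sobolev inequality \eqref{0.h1l6} to $v=\phi_\eb^{1/2}\mu$, together with the same lower bound on the weight, namely
\[
\|\phi_\eb^{1/2}\mu\|_{L^6(\R^3)}^2\le C\|\Nx(\phi_\eb^{1/2}\mu)\|_{L^2}^2,\qquad \|\mu\|_{L^6(B^1_{x_0})}^2\le c^{-1}\|\phi_\eb^{1/2}\mu\|_{L^6(\R^3)}^2.
\]
Integrating in time then produces the stated $L^2([0,T],L^6(B^1_{x_0}))$ bounds on $\mu$ and $f(u)$, uniformly in $x_0$. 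The term $\Dt u=\Dx\mu=\div(\Nx\mu)$ is then immediate from the boundedness of $\div\colon L^2(B^1_{x_0})\to W^{-1,2}(B^1_{x_0})$, giving $\|\Dt u\|_{W^{-1,2}(B^1_{x_0})}\le\|\Nx\mu\|_{L^2(B^1_{x_0})}$; finally, for the second-order control of $u$ I would invoke the maximal regularity theorem for $-\Dx u+f_0(u)=\mu-\psi(u)-g$ in $L^6_b(\R^3)$ quoted after \eqref{0.3}, whose right-hand side is now bounded in $L^2([0,T],L^6_b)$ ($\mu$ by the previous step, $\psi(u)$ by \eqref{0.1}, $g\in L^6_b$), yielding $u\in L^2([0,T],W^{2,6}(B^1_{x_0}))$ with the same bound.

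The genuinely delicate part is the bookkeeping of the powers of $\eb$ together with the uniformity in $x_0$: one must verify that inserting $\eb$ from \eqref{0.ebb} into the time-integrated right-hand side reproduces exactly the factor $(T+1)^4(\,\cdots)^{5/2}$ and not a larger power, and that the lower bound $c$ on the weight over $B^1_{x_0}$ is truly independent of $x_0$ and of $\eb$. I expect both to hold as indicated, since all the analytic content—the weighted Sobolev control of $\mu$ via $\|\Nx(\phi_\eb^{1/2}\mu)\|_{L^2}$ and the elliptic maximal regularity in uniformly local spaces—is already in place, so this corollary reduces to careful but routine extraction.
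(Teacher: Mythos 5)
Your proposal is correct and follows essentially the same route as the paper's own (very terse) argument: $\Dt u=\Dx\mu$ plus the weighted gradient bound for the first term, the $\|\Nx(\phi_\eb^{1/2}\mu)\|_{L^2}^2$ dissipation term combined with the Sobolev embedding for the $L^6$-bound on $\mu$, and the $L^6$-maximal regularity for the semilinear equation \eqref{0.mu} for the last two terms. The only (harmless) difference is that you also extract the $f(u)$ bound directly from the $(\phi_\eb^3,|f(u)|^6)^{1/3}$ term of \eqref{0.better}, whereas the paper obtains it from the maximal regularity step; both give the same estimate.
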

Indeed, the estimate of the first term in the left-hand side follows from the identity $\Dt u=\Dx\mu$ and estimate \eqref{0.4},
the estimate for the $L^6$-norm of $\mu$ follows from the presence of the term $\|\Nx(\phi_\eb^{1/2}\mu)\|^2_{L^2}$ in the left-hand side
of \eqref{0.huge} and Sobolev embedding. Finally, the estimate for  two last terms in the left-hand side is a corollary of the corresponding
 estimate for $\mu$ and the $L^6$-maximal regularity for the semilinear equation \eqref{0.mu}.

\section{Uniqueness and further regularity}\label{s3}
The aim of this section is to verify that  the solution $u$ of the Cahn-Hilliard equation \eqref{CH} constructed in Theorem \ref{Th0.1} is unique
and to check that this solution is actually smooth. To this end, we need more assumptions on the nonlinearity $f$. Namely, we assume that there exists a {\it convex} positive function $\Psi$ such that
\begin{equation}\label{1.psi}
\begin{cases}
1.\ \  \Psi(u)\le C(|F(u)|+1),\ \\
2.\ \  |f'(u)|\le \Psi(u).
\end{cases}
\end{equation}
Note that all of the conditions \eqref{0.1} and \eqref{1.psi} on the nonlinearity $f$ do not look restrictive and are satisfied, e.g., for any polynomial of  odd order and positive highest  coefficient and even for some exponentially growing potentials.
\par
 The main result of this section is the following theorem.
\begin{theorem}\label{Th1.unique} Let the above assumptions hold and let $u_1,u_2$ be two solutions of problem \eqref{CH} satisfying \eqref{0.3}.
 Then, the following estimate holds:
\begin{equation}\label{1.2}
\|u_1(t)-u_2(t)\|_{W^{-1,2}_b}\le C_T\|u_1(0)-u_2(0)\|_{W^{-1,2}_b},
\end{equation}
where the constant $C_T$ depends only on $T$ and on the \eqref{0.3}-norms of the solutions $u_1$ and~$u_2$.
\end{theorem}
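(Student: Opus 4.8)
The plan is to derive the standard $W^{-1,2}$ contraction estimate for the difference of two solutions, but localized via the weighted spaces introduced in Section~\ref{s1}. Set $v:=u_1-u_2$ and subtract the two copies of \eqref{CH}. Since the difference of the chemical potentials is $\mu_1-\mu_2=-\Dx v+(f(u_1)-f(u_2))$, the equation for $v$ reads $\Dt v=\Dx(-\Dx v+f(u_1)-f(u_2))$. In a bounded domain the natural energy is $\frac12\|v\|_{W^{-1,2}}^2=\frac12((-\Dx)^{-1}v,v)$, but in $\R^3$ the inverse Laplacian is unavailable, so I would instead work with a \emph{weighted} negative norm. Concretely, for the polynomial weight $\phi_\eb$ of \eqref{weight} I would test the difference equation against $\phi_\eb w$, where $w$ solves a weighted elliptic problem $-\div(\phi_\eb\Nx w)=\phi_\eb v$ (so that $w$ plays the role of a weighted $(-\Dx)^{-1}v$), and track the quantity $\frac12(\phi_\eb\Nx w,\Nx w)\sim\|v\|^2_{W^{-1,2}_{\phi_\eb}}$ in time. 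Taking the supremum over the shift parameter $x_0$ and using \eqref{2.w-un} would then convert the weighted estimate into the uniformly local norm $\|v(t)\|_{W^{-1,2}_b}$ appearing in \eqref{1.2}.

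After the $\Dx v$ term produces (up to weight-derivative corrections controlled by \eqref{0.6}) a good dissipative contribution $-(\phi_\eb|\Nx v|^2)$, the crux is to bound the nonlinear term $(f(u_1)-f(u_2),\phi_\eb\,w)$. Writing $f(u_1)-f(u_2)=v\int_0^1 f'(su_1+(1-s)u_2)\,ds$ and invoking the convexity hypothesis \eqref{1.psi} together with Jensen's inequality, I would estimate the integrand by $\Psi$ evaluated at the $u_i$, hence by $C(|F(u_1)|+|F(u_2)|+1)$ using the first part of \eqref{1.psi}. The regularity \eqref{0.3} guarantees $F(u_i)\in L^1_b$, and the corollary after Theorem~\ref{Th0.1} gives the additional integrability of $u_i$ and $f(u_i)$ in $L^2([0,T],L^6)$; these supply the integrable-in-time coefficient needed to close a Gronwall argument. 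The goal is an inequality of the schematic form
\begin{equation*}
\frac{d}{dt}\,\|v(t)\|^2_{W^{-1,2}_{\phi_\eb}}\le h(t)\,\|v(t)\|^2_{W^{-1,2}_{\phi_\eb}},
\end{equation*}
where $h\in L^1([0,T])$ depends only on $T$ and the \eqref{0.3}-norms of $u_1,u_2$, after which Gronwall yields the weighted contraction with constant $C_T=\exp(\int_0^T h)$.

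The main obstacle, and the place requiring genuine care, is the coupling between the localization weight and the nonlinearity: the weighted inverse Laplacian $w$ is a nonlocal object, and one must control cross terms in which $\Nx\phi_\eb$ hits $w$ or $\Nx w$. The device \eqref{0.6}, which makes $|D^N_x\phi_\eb|\le C_N\eb^N\phi_\eb$, is exactly what renders these commutator-type terms lower order and absorbable into the dissipative $(\phi_\eb|\Nx v|^2)$ term once $\eb$ is chosen small (here $\eb$ may be fixed depending on $T$ and the solution norms, as in the proof of Theorem~\ref{Th0.1}). A secondary technical point is justifying the formal testing against $\phi_\eb w$ rigorously given only the regularity \eqref{0.3}; as in the existence proof I would treat the computation formally, the justification by approximation being standard. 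I expect the H\"older bookkeeping in the nonlinear term---matching the $L^6$-integrability of $f(u_i)$ against the $L^{6/5}$-type factor carried by $v$ and the weight---to be the most delicate accounting, but conceptually routine once the weighted $W^{-1,2}$ framework is set up.
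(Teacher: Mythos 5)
Your overall strategy coincides in outline with the paper's: a weighted $W^{-1,2}$ energy estimate for $v=u_1-u_2$, the representation $f(u_1)-f(u_2)=l(t)v$ with $l(t)=\int_0^1f'(su_1+(1-s)u_2)\,ds$, the bound $\|l(t)\|_{L^1_b}\le C(\|F(u_1)\|_{L^1_b}+\|F(u_2)\|_{L^1_b}+1)$ via convexity of $\Psi$ and Jensen, and Gronwall. Two points, however, are genuine gaps rather than routine bookkeeping. First, the operator you propose to generate the negative norm, $-\div(\phi_\eb\Nx w)=\phi_\eb v$, is not invertible: constants lie in its kernel (the form $\int\phi_\eb|\Nx w|^2$ has no zeroth-order part and $\phi_\eb\in L^1$), so solvability requires $\int_{\R^3}\phi_\eb v\,dx=0$, which is neither conserved nor available here. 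The paper instead takes $w=(-\Dx+1)^{-1}v$ and tests with $\varphi_\eb v-\Nx\varphi_\eb\cdot\Nx w$; the weighted maximal regularity $\|w\|_{W^{s+2,2}_{\varphi_\eb}}\sim\|v\|_{W^{s,2}_{\varphi_\eb}}$ for this resolvent is what makes everything below work. You would at minimum need to add a zeroth-order term to your weighted elliptic problem.

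Second, and more seriously, closing the nonlinear term is not ``conceptually routine H\"older.'' The coefficient $l(t)$ is controlled only in $L^1_b$ (it involves $f'$, not $f$, so the $L^6$ integrability of $f(u_i)$ from the corollary after Theorem \ref{Th0.1} is of no help), and pairing an $L^1$ coefficient against $v$ times the nonlocal quantities would require that product to lie in $L^\infty$, which fails for $v\in W^{1,2}(\R^3)$. The paper's resolution rests on two devices absent from your proposal: (i) the monotonicity $f_0'\ge1$ makes $(l(t)v,\varphi_\eb v)$ a good term on the left-hand side, which absorbs the half $(|l|,\varphi_\eb v^2)$ of the Cauchy--Schwarz splitting $(|l|\,|v|,h)\le(|l|,\varphi_\eb v^2)+(|l|,\varphi_\eb h^2)$; (ii) the other half is bounded by $\|l\|_{L^1_b}\|\varphi_\eb^{1/2}h\|^2_{L^\infty}$, and it is only because $h$ is built from $(-\Dx+1)^{-1}$ (hence gains two derivatives and lands in $W^{7/4,2}_{\varphi_\eb}$, which embeds into the weighted $L^\infty$) that this is controlled by $C_T\|v\|^2_{W^{3/4,2}_{\varphi_\eb}}$; interpolating $W^{3/4,2}$ between $W^{1,2}$ and $W^{-1,2}$ and applying Young's inequality then splits this into a piece absorbed by the dissipation and a Gronwall piece. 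Without the sign of $l$ and the two-derivative smoothing of the resolvent the estimate does not close, so this is the step your argument still needs to supply.
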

\begin{proof} Let $v(t):=u_1(t)-u_2(t)$. Then, this function solves the equation
\begin{equation}\label{1.3}
\Dt v=\Dx(-\Dx v+l(t)v),\ \ v\big|_{t=0}=v_0,\ l(t):=\int_0^1f'(su_1+(1-s)u_2)\,ds
\end{equation}
which we rewrite in the following equivalent form (adapted to the $H^{-1}$-energy estimates):
\begin{equation}\label{1.3.5}
(-\Dx+1)^{-1}\Dt v=\Dx v-l(t)v+(-\Dx+1)^{-1}(\Dx v-l(t)v).
\end{equation}
Let now $\varphi(x):=e^{-\sqrt{|x|^2+1}}$ be the exponential weight function (see \eqref{weight})  and let $\varphi_{\eb}(x)=\varphi_{\eb,x_0}(x)=\varphi(\eb(x-x_0))$. Then this function satisfies estimate \eqref{1.weight} uniformly also with respect to $x_0\in\R^3$.
\par
We multiply \eqref{1.3}
by the following function:
$$
-\Nx(\varphi_{\eb}\Nx((-\Dx+1)^{-1}v))+\varphi_{\eb}(-\Dx+1)^{-1}v=\varphi_{\eb} v-\Nx\varphi_{\eb}\cdot\Nx((-\Dx+1)^{-1}v)
$$
and integrate over $x$. Then, denoting by $w:=(-\Dx+1)^{-1}v$, we have
\begin{multline}\label{1.4}
\frac12\frac d{dt}(|\Nx w|^2+|w|^2,\varphi_{x_0})+(|\Nx v|^2,\varphi_{\eb})+(l(t)v,\varphi_{\eb} v)=(-\Dx\varphi_{\eb},|v|^2)-\\-
(\Dx v,\Nx\varphi_{\eb}\cdot\Nx w)+(l(t)v,\Nx\varphi_{\eb}\cdot\Nx w)+\\+
((-\Dx+1)^{-1}(\Dx v-l(t)v),\varphi_{\eb}v)
-((-\Dx+1)^{-1}(\Dx v-l(t)v),\Nx\varphi_{\eb}\cdot\Nx w).
\end{multline}
Using now the weighted maximal regularity for the equation $-\Dx w+w=v$, we have that, for sufficiently small $\eb$,
\begin{equation}\label{1.max}
\|w\|_{W^{s+2,2}_{\varphi_\eb}}\sim\|v\|_{W^{s,2}_{\varphi_\eb}},\ \ s\in\R,
\end{equation}
where the equivalence constants depend only on $s$, see \cite{EZ,MZ}. Therefore, integrating by parts and using \eqref{1.weight} together with \eqref{1.max} and Cauchy-Schwartz inequality, we end up with
\begin{multline}\label{1.big}
\frac d{dt}\|w\|^2_{W^{1,2}_{\varphi_\eb}}+\|v\|^2_{W^{1,2}_{\varphi_\eb}}+(|l(t)|v,\varphi_{\eb} v)\le C\|v\|^2_{L^2_{\varphi_\eb}}+\\+
(|l(t)|\cdot|v|,\varphi_\eb|\Nx w|+|(-\Dx+1)^{-1}(\varphi_\eb v)|+|(-\Dx+1)^{-1}(\Nx\varphi_\eb\cdot\Nx w)|).
\end{multline}
Let now
$$
h:=\varphi_\eb^{-1}(\varphi_\eb|\Nx w|+|(-\Dx+1)^{-1}(\varphi_\eb v)|+|(-\Dx+1)^{-1}(\Nx\varphi_\eb\cdot\Nx w)|).
$$
 Then, applying the Cauchy-Schwartz
inequality together with the weighted maximal regularity for the Laplacian, we conclude that
\begin{multline}\label{1.nest}
(|l(t)|v|,h)\le (|l(t)|,\varphi_\eb v^2)+(|l(t)|,\varphi_\eb h^2)\le (|l(t)|v,\varphi_\eb v)+\|l(t)\|_{L^1_b}\|\varphi_{\eb}^{1/2}h\|_{L^\infty}^2\le\\\le
(|l(t)|v,\varphi_\eb v)+C\|l(t)\|_{L^1_b}\|h\|_{W^{7/4,2}_{\varphi_\eb}}^2\le (|l(t)|v,\varphi_\eb v)+C\|l(t)\|_{L^1_b}\|v\|^2_{W^{3/4,2}_{\varphi_\eb}}.
\end{multline}
We now estimate the $L^1_b$-norm of $l(t)$ using assumptions \eqref{1.psi}. Namely,
\begin{multline}\label{1.lest}
\|l(t)\|_{L^1_b}\le \int_0^1\|f'(su_1+(1-s)u_2)\|_{L^1_b}\,ds\le\\\le\int_0^1\|\Psi(su_1+(1-s)u_2)\|_{L^1_b}\,ds\le \int_0^1\|s\Psi(u_1)+(1-s)\Psi(u_2)\|_{L^1_b}\,ds\le\\\le \|\Psi(u_1)\|_{L^1_b}+\|\Psi(u_2)\|_{L^1_b}\le C(\|F(u_1)\|_{L^1_b}+\|F(u_2)\|_{L^1_b}+1).
\end{multline}
Thus, $\|l(t)\|_{L^1_b}\le C_T=C_T(u_1,u_2)$ and using \eqref{1.nest}, we rewrite \eqref{1.big} as follows
\begin{equation}\label{1.small}
\frac d{dt}\|w\|^2_{W^{1,2}_{\varphi_\eb}}+\|v\|^2_{W^{1,2}_{\varphi_\eb}}\le C_T\|v\|^2_{W^{3/4,2}_{\varphi_\eb}}.
\end{equation}
Interpolating now the $W^{3/4,2}$-norm between the $W^{1,2}$ and $W^{-1,2}$-norms, we finally arrive at
\begin{equation}\label{1.fin}
\frac d{dt}\|w\|^2_{W^{1,2}_{\varphi_{\eb,x_0}}}\le C_T\|w\|^2_{W^{1,2}_{\varphi_{\eb,x_0}}},
\end{equation}
where the constant $C_T$ grows polynomially in $T$ (according to \eqref{0.4}). Applying the Gronwall inequality to this relation, taking the supremum over $x_0\in\R^3$ and using \eqref{2.w-un}, we end up with \eqref{1.2} and finish the proof of the theorem.
\end{proof}
The next several simple corollaries of the proved theorem show that the constructed solution $u$ is smooth.
\begin{corollary}\label{Cor1.dt} Let the above assumptions hold and let, in addition, $u_0$ be smooth enough to guarantee that $\Dt u(0)\in W^{-1,2}_b(\R^3)$. Then, $\Dt u(t)\in W^{-1,2}_b$ for all $t\ge0$ and its norm grows at most polynomially in time. If $\Dt u(0)\notin W^{-1,2}_b(\R^3)$, then nevertheless $\Dt u(t)\in W^{-1,2}_b(\R^3)$ for all $t>0$ and the following estimate holds:
\begin{equation}\label{1.dtsmooth}
\|\Dt u(t)\|_{W^{-1,2}_b}^2\le Ct^{-1}(1+t^N)Q(1+\|g\|^2_{L^6_b}+\|u_0\|_{\Phi_b})
\end{equation}
for some monotone function $Q$ and constants $C$ and $N$ independent of $u_0$ and $t$.
\end{corollary}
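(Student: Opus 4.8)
The plan is to differentiate the Cahn--Hilliard equation formally in time and then apply the $W^{-1,2}_b$-estimate that was already established for differences of solutions in Theorem \ref{Th1.unique}. Formally, let $\theta:=\Dt u$. Differentiating \eqref{CH} in $t$ gives
\begin{equation*}
\Dt\theta=\Dx(-\Dx\theta+f'(u)\theta),
\end{equation*}
which is exactly the linearized equation \eqref{1.3} with the potential $l(t)$ replaced by $f'(u(t))$ and with $v$ replaced by $\theta$. Since \eqref{1.3} was solved purely through the weighted $H^{-1}$-energy estimate \eqref{1.fin}, the same computation applies verbatim to $\theta$: the only structural input needed was the bound $\|l(t)\|_{L^1_b}\le C_T$, and here the analogous quantity $\|f'(u(t))\|_{L^1_b}$ is controlled in exactly the same way via \eqref{1.psi} and \eqref{1.lest}, using that $\|F(u(t))\|_{L^1_b}$ grows at most polynomially in $t$ by \eqref{0.4}. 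Thus I would first record that $\theta=\Dt u$ satisfies the differential inequality \eqref{1.fin} with $w:=(-\Dx+1)^{-1}\theta$, so that Gronwall and \eqref{2.w-un} yield
\begin{equation*}
\|\Dt u(t)\|^2_{W^{-1,2}_b}\le C_T\,\|\Dt u(0)\|^2_{W^{-1,2}_b}
\end{equation*}
with $C_T$ polynomial in $T$. This immediately handles the first assertion, the case $\Dt u(0)\in W^{-1,2}_b$.

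For the second (smoothing) assertion, where $\Dt u(0)$ need not lie in $W^{-1,2}_b$, the plan is to run the same weighted energy identity but to keep the dissipative term on the left-hand side rather than discarding it. Going back to \eqref{1.small}--\eqref{1.fin} with $v$ replaced by $\theta$, the interpolation of the $W^{3/4,2}$-norm between $W^{1,2}$ and $W^{-1,2}$ produces, before the final simplification, an inequality of the schematic form
\begin{equation*}
\frac d{dt}\|w\|^2_{W^{1,2}_{\varphi_\eb}}+\tfrac12\|\theta\|^2_{W^{1,2}_{\varphi_\eb}}\le C_T\|w\|^2_{W^{1,2}_{\varphi_\eb}},
\end{equation*}
in which the genuinely coercive term $\|\theta\|^2_{W^{1,2}_{\varphi_\eb}}$ (equivalently $\|w\|^2_{W^{3,2}_{\varphi_\eb}}$ via \eqref{1.max}) has been retained. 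Multiplying this relation by the integrating factor $t$ and using the elementary bound $\|w\|^2_{W^{1,2}_{\varphi_\eb}}\le C\|\theta\|^2_{W^{-1,2}_{\varphi_\eb}}\le C\|\theta\|^2_{W^{1,2}_{\varphi_\eb}}$, one absorbs the right-hand side and integrates in time. The point is that $\int_0^t\|w\|^2_{W^{1,2}_{\varphi_\eb}}\,ds$ is finite and controlled by the energy estimate \eqref{0.4}: indeed $\theta=\Dt u=\Dx\mu$, so $\|w\|^2_{W^{1,2}_{\varphi_\eb}}\lesssim\|\Dt u\|^2_{W^{-1,2}_{\varphi_\eb}}\lesssim\|\Nx\mu\|^2_{L^2_{\varphi_\eb}}$, whose time integral is exactly the quantity bounded in \eqref{0.4}. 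Dividing by $t$ then gives the factor $t^{-1}$ in \eqref{1.dtsmooth}, while the polynomial growth of $C_T$ and of the energy bound furnishes the factor $(1+t^N)$ and the monotone function $Q$ of the data.

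The main obstacle, and the place where care is needed, is the same one that underlies the whole paper: all these manipulations are justified only at the level of the approximating finite-energy solutions, and the differentiation $\Dt\theta=\Dx(-\Dx\theta+f'(u)\theta)$ is formal for merely $\Phi_b$-regular $u$. Rigorously one must either work with difference quotients $\tau_h^{-1}(u(t+h)-u(t))$ in place of $\Dt u$ and pass to the limit $h\to0$, or carry out the argument on the smooth approximations $(u_0^n,g^n)$ and take limits in the local topology as in the proof of Theorem \ref{Th0.1}; the difference-quotient route is cleaner because $\tau_h^{-1}(u(t+h)-u(t))$ satisfies precisely equation \eqref{1.3} with potential $l_h(t):=\int_0^1 f'(su(t+h)+(1-s)u(t))\,ds$, whose $L^1_b$-norm is bounded uniformly in $h$ by \eqref{1.lest}, so Theorem \ref{Th1.unique} applies directly and the estimate \eqref{1.dtsmooth} survives the limit $h\to0$. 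Apart from this justification, and the routine but slightly delicate interpolation and weighted-maximal-regularity bookkeeping needed to keep all constants uniform in $\eb\to0$ and in $x_0\in\R^3$, the proof is a direct corollary of the $W^{-1,2}_b$-contraction mechanism already developed in Theorem \ref{Th1.unique}.
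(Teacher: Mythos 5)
Your proposal is correct and follows essentially the same route as the paper: differentiate \eqref{CH} in time, observe that $\Dt u$ satisfies the linearized equation \eqref{1.3} with $l(t)$ replaced by $f'(u(t))$ (controlled in $L^1_b$ via \eqref{1.psi} and \eqref{1.lest}), derive the Gronwall inequality \eqref{1.dteq}, and for the smoothing estimate multiply by $t$ and integrate, using that $\int_0^t\|\Dt u\|^2_{W^{-1,2}_{\varphi_\eb}}\,ds\lesssim\int_0^t\|\Nx\mu\|^2_{L^2_{\varphi_\eb}}\,ds$ is bounded by \eqref{0.4}/\eqref{0.reg} --- exactly the argument the paper carries out (and spells out explicitly only in the proof of Corollary \ref{Cor2.dt}). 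Your additional care about justifying the formal time differentiation via difference quotients or approximation is a welcome refinement of the paper's terser presentation.
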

\begin{proof} Indeed, differentiating equation \eqref{CH} in time and denoting $v(t):=\Dt u(t)$, we see that $v$ solves the equation
\begin{equation}\label{1.diff}
\Dt v=\Dx(-\Dx v+f'(u)v),\ \ v\big|_{t=0}=\Dt u(0)
\end{equation}
which is almost identical to equation \eqref{1.3}. Therefore, denoting by $w(t):=(-\Dx+1)^{-1}v(t)$ and arguing exactly as in the proof of the theorem, we derive that
\begin{equation}\label{1.dteq}
\frac d{dt}\|\Dt u(t)\|^2_{W^{-1,2}_{\varphi_\eb}}\le C(t)\|\Dt u(t)\|^2_{W^{-1,2}_{\varphi_\eb}},
\end{equation}
where $C(t)$ grows polynomially in time. This estimate, together with \eqref{0.reg} proves both assertions of the corollary.
\end{proof}
\begin{corollary}\label{Cor1.w26} Let the above assumptions hold and let, in addition, $u_0\in W^{2,6}_b(\R^3)$. Then, $u(t)\in W^{2,6}_b(\R^3)$ for all $t\ge0$ and its norm grows at most polynomially in time. If $u(0)\notin W^{2,6}_b(\R^3)$, then nevertheless $u(t)\in W^{2,6}_b(\R^3)$ for all $t>0$.
\end{corollary}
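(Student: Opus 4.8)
The plan is to reduce the claim to a pointwise-in-time control of the chemical potential $\mu=-\Dx u+f(u)+g$ in $L^6_b(\R^3)$ and then to invoke the $L^6_b$-maximal regularity for the semilinear elliptic equation \eqref{0.mu}, exactly as in the Corollary following Theorem \ref{Th0.1}. Indeed, once we know that $\mu(t)\in L^6_b(\R^3)$ with an at most polynomially growing norm, the monotonicity $f'_0\ge0$ together with the maximal regularity estimate for $-\Dx u+f_0(u)=\mu-\psi(u)-g$ in $L^6_b$ immediately gives $u(t)\in W^{2,6}_b(\R^3)$ and $f(u(t))\in L^6_b(\R^3)$ with the corresponding bound. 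Thus the whole corollary is really about upgrading the regularity of $\mu$ from the $L^2_b([0,T],\cdot)$-integrated statement \eqref{0.reg} to a genuine pointwise-in-time statement.

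The engine for this upgrade is Corollary \ref{Cor1.dt}, which guarantees $\Dt u(t)=\Dx\mu(t)\in W^{-1,2}_b(\R^3)$ for every $t>0$, with norm growing at most polynomially in $t$ (this holds for arbitrary $u_0\in\Phi_b$ by the smoothing estimate \eqref{1.dtsmooth}). First I would record the elementary elliptic estimate obtained by writing $(-\Dx+1)\mu=\mu-\Dt u$ and applying the bounded solution operator $(-\Dx+1)^{-1}\colon W^{-1,2}_b\to W^{1,2}_b$ coming from the weighted/uniformly local maximal regularity \eqref{1.max} with $s=-1$ (first in the weighted norms $\varphi_\eb$, then passing to the uniformly local norm by taking the supremum over $x_0$ and using \eqref{2.w-un}). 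This yields $\|\mu\|_{W^{1,2}_b}\le C\big(\|\mu\|_{W^{-1,2}_b}+\|\Dt u\|_{W^{-1,2}_b}\big)$, i.e. two derivatives are gained from the information $\Dx\mu\in W^{-1,2}_b$ provided one already controls $\mu$ itself in $W^{-1,2}_b$. Feeding $\mu\in W^{1,2}_b$ into the Sobolev embedding $W^{1,2}_b\hookrightarrow L^6_b$ (cf. \eqref{0.h1l6}) then produces the desired $\mu(t)\in L^6_b$, closing the scheme above.

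The term $\|\mu\|_{W^{-1,2}_b}$ on the right-hand side still has to be controlled, and here lies the main obstacle. Since $\mu=-\Dx u+f(u)+g$ with $u(t)\in W^{1,2}_b$ and $g\in L^6_b$, it suffices to bound $f(u(t))$ in $W^{-1,2}_b$; but for nonlinearities of arbitrary polynomial growth the only pointwise-in-time information furnished by $u(t)\in\Phi_b$ is $f(u(t))\in L^1_b$ (from the third assumption in \eqref{0.1} together with $F(u(t))\in L^1_b$), and $L^1_b$ does not embed into $W^{-1,2}_b$ in three space dimensions. The way around this is to use the integrated regularity \eqref{0.reg}, which gives $f(u(t))\in L^6_b\subset W^{-1,2}_b$ for almost every $t$; at such times the two-derivative gain applies and yields $u(t)\in W^{2,6}_b$ with the stated bound.

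Finally I would upgrade the almost-everywhere statement to every $t>0$. The favorable point is that the controlling quantity $\|\Dt u(t)\|_{W^{-1,2}_b}$ is, by Corollary \ref{Cor1.dt}, finite and polynomially bounded for \emph{all} $t>0$, so the only obstruction to a uniform bound is the $\|\mu\|_{W^{-1,2}_b}$ term. Using $u\in C([0,T],L^2_{loc}(\R^3))$ together with the uniform-in-$x_0$ structure of the weighted estimates, I would fix $t>0$, choose a sequence of ``good'' times $t_n\to t$ along which $\|u(t_n)\|_{W^{2,6}_b}$ is bounded, and pass to the limit by weak-$*$ compactness and lower semicontinuity of the $W^{2,6}_b$-norm to conclude $u(t)\in W^{2,6}_b$. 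When $u_0\in W^{2,6}_b$ the case $t=0$ is the hypothesis itself, for $t>0$ the above applies, and gluing through the continuity and the polynomial-in-time bounds gives $u(t)\in W^{2,6}_b$ for all $t\ge0$ with at most polynomial growth; when $u_0\notin W^{2,6}_b$ the instantaneous gain for $t>0$ is precisely the smoothing contained in Corollary \ref{Cor1.dt}. I expect the verification that the bound obtained at a.e.\ $t$ is locally bounded in $t$ (so that the lower-semicontinuity step produces a finite limit) to be the only genuinely delicate point of the argument.
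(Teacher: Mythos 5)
Your overall skeleton is the same as the paper's: use $\Dt u(t)=\Dx\mu(t)\in W^{-1,2}_b$ from Corollary \ref{Cor1.dt} to get $\|\mu(t)\|_{W^{1,2}_b}\le C(\|\Dt u(t)\|_{W^{-1,2}_b}+\|\mu(t)\|_{W^{-1,2}_b})$, embed $W^{1,2}_b\hookrightarrow L^6_b$, and finish with the $L^6_b$-maximal regularity for the semilinear equation \eqref{0.mu}. You also correctly identify the crux, namely that $f(u(t))\in L^1_b$ is the only pointwise-in-time information and $L^1_b\not\hookrightarrow W^{-1,2}_b$ in 3D. But your workaround has a genuine gap. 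You propose to use the integrated bound \eqref{0.reg} to get $f(u(t))\in L^6_b$ for a.e.\ $t$ and then pass to the limit along good times. The norm in \eqref{0.reg} (and in \eqref{1.stul}) is of the form $\sup_{x_0}\int_0^T\|f(u(t))\|^2_{L^6(B^1_{x_0})}\,dt$, i.e.\ the supremum over $x_0$ sits \emph{outside} the time integral. From this one only gets, for each fixed $x_0$, a null set $N_{x_0}$ of bad times; the union over $x_0$ need not be null, and more importantly there is no uniform-in-$x_0$ bound on $\|f(u(t))\|_{L^6(B^1_{x_0})}$ at any single time, nor along any sequence $t_n\to t$ chosen independently of $x_0$. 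This is precisely the distinction between \eqref{1.stul} and the stronger norm \eqref{bad} that the paper's Remark in Section \ref{s1} warns about, and \eqref{bad} is not controlled. So both the ``a.e.\ $t$'' claim and the selection of good times with locally bounded $W^{2,6}_b$-norms (which you yourself flag as the delicate point) are unjustified as stated.

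The paper closes the loop differently and entirely pointwise in time: instead of trying to put $\mu(t)$ into $W^{-1,2}_b$ directly, it estimates $\|\mu(t)\|_{W^{-2,2}_b}$ in \eqref{1.muweak}, using that $(-\Dx+1)^{-1}$ maps $L^1_b(\R^3)$ into $L^2_b(\R^3)$ in three dimensions, so that the pointwise-in-time information $\|f(u(t))\|_{L^1_b}\le C(\|F(u(t))\|_{L^1_b}+1)$ (valid for \emph{every} $t$ by Theorem \ref{Th0.1} and assumption 3 of \eqref{0.1}) suffices. Then $\|\mu\|_{W^{-1,2}_b}$ is interpolated between $\|\mu\|_{W^{1,2}_b}$ and $\|\mu\|_{W^{-2,2}_b}$ and the $W^{1,2}_b$-part is absorbed into the left-hand side of \eqref{1.mureg}, yielding \eqref{1.mu06} for every $t$ with polynomially growing constants. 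No a.e.\ statement, compactness, or lower semicontinuity is needed. If you replace your ``good times'' argument by this one extra step down to $W^{-2,2}_b$ plus interpolation, your proof becomes the paper's.
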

\begin{proof} Rewriting the Cahn-Hilliard equation in the form
$$
\mu=-(-\Dx+1)^{-1}\Dt u(t)+(-\Dx+1)^{-1}\mu
$$
and using the maximal regularity for the Laplacian in the uniformly local spaces, we see that
\begin{equation}\label{1.mureg}
\|\mu(t)\|_{W^{1,2}_b}\le C\|\Dt u(t)\|_{W^{-1,2}_b}+C\|\mu(t)\|_{W^{-1,2}_b}.
\end{equation}
Moreover, according to Theorem \ref{Th0.1}, we have
\begin{multline}\label{1.muweak}
\|\mu(t)\|_{W^{-2,2}_b}\le C\|(-\Dx+1)^{-1}(\Dx u(t)-f(u(t))+g)\|_{L^2_b}\le\\\le
 C(\|u(t)\|_{W^{1,2}_b}+\|g\|_{L^2_b}+\|f(u(t))\|_{L^1_b})\le C(\|u(t)\|_{\Phi_b}+\|g\|_{L^6_b})\le C(t,u_0,g).
\end{multline}
Thus, due to \eqref{1.mureg}, \eqref{1.muweak} and interpolation, we have
\begin{multline}\label{1.mu06}
\|\mu(t)\|_{L^6_b}\le C\|\mu(t)\|_{W^{1,2}_b}\le \\\le C(\|\Dt u(t)\|_{W^{-1,2}_b}+\|u(t)\|_{\Phi_b}+\|g\|_{L^6_b})\le C\|\Dt u(t)\|_{W^{-1,2}_b}+C(t,u_0,g),
\end{multline}
where the constant $C(t,u_0,g)$ grows polynomially in time.
\par
Estimate \eqref{1.mu06} together with Corollary \ref{Cor1.dt} give the assertion of Corollary \ref{Cor1.w26} for the $L^6_b$-norm of $\mu$. In order to obtain the analogous assertions for the $W^{2,6}_b$-norm of $u$, we apply the $L^6_b$-maximal regularity theorem for the semilinear equation \eqref{0.mu} which gives
\begin{equation}\label{1.maxsem}
\|u(t)\|_{W^{2,6}_b}\le C(1+\|\mu(t)\|_{L^6_b})\le C_1(\|\Dt u(t)\|_{W^{-1,2}_b}+\|u(t)\|_{\Phi_b}+\|g\|_{L^6_b}),
\end{equation}
Thus, the corollary is proved.
\end{proof}
\begin{remark} \label{Rem1. nice} The proved regularity is more than enough to initialize  the standard bootstraping process and to verify that the factual smoothness of $u(t)$ is restricted by the smoothness of $f$ and $g$ only. In particular, if both of them are $C^\infty$-smooth the solution will be $C^\infty$-smooth as well. If they are, in addition, real analytic, one has the real analytic in $x$ solution $u(t,x)$ as well (for $t>0$).
\end{remark}

\section{Dissipative estimate for the Cahn-Hilliard-Oono equation}\label{s4}

In this section we apply the above developed techniques to the so-called  Cahn-Hillard-Oono equation
\begin{equation}\label{CHd}
 \Dt u=\Dx\mu - \lambda u,\ \ \mu:=-\Dx u+f(u)+g(x),\ \ u\big|_{t=0}=u_0
\end{equation}
which differs from the classical Cahn-Hilliard equation by the presence of an extra term $\lambda u$ where the constant $\lambda>0$. The extra dissipative term  has been initially introduced to model the long-range nonlocal interactions (see \cite{Oop} and also \cite{Miro} for further details) and essentially simplifies the analysis of the long-time behavior of the Cahn-Hilliard equations in unbounded domains and   as we will see,  guarantees the dissipativity of the equation in the uniformly local spaces. To be more precise, the following theorem can be considered as the main result of the section.

\begin{theorem}\label{Th0.1d} Let the assumptions of Theorem \ref{Th0.1} hold.
Then the Cahn-Hilliard-Oono equation \eqref{CHd} possesses at least one global in time solution in the sense of \eqref{0.3} (for all $T>0$) which satisfies the following estimate:
\begin{multline}\label{thd.1}
\|u(t)\|_{W^{1,2}_b}^2+\|F(u(t))\|_{L^1_b}+\|\Nx\mu\|_{L^2_b([t, t+1]\times\R^3)}^2\le\\\le
Q(\|g\|_{L^6_b}) + Q(\|u(0)\|^2_{W^{1, 2}_b} + \|F(u(0))\|_{L^1_b})e^{-\sigma t},\ \ t\ge0,
\end{multline}
for some monotone increasing function $Q$ and positive constant $\sigma$  independent of the initial data $u_0$ and $t\ge0$.
\end{theorem}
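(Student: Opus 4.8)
The plan is to rerun the weighted energy scheme of Theorem \ref{Th0.1} on equation \eqref{CHd}, extracting from the extra term $-\lambda u$ a genuine dissipative contribution $-\sigma E_{\phi_\eb}$, and then to upgrade the resulting Riccati-type inequality to a dissipative estimate by allowing the weight parameter $\eb=\eb(t)$ to depend on time. First I would multiply \eqref{CHd} by $\phi_\eb\mu=\phi_{\eb,x_0}\mu$ and integrate over $x$, reproducing the identity \eqref{0.7} with one additional term $-\lambda(\phi_\eb u,\mu)$ on the right. Expanding $\mu=-\Dx u+f(u)+g$ in this term gives
$$-\lambda(\phi_\eb u,\mu)=-\lambda(\phi_\eb,|\Nx u|^2)-\lambda(u\Nx\phi_\eb,\Nx u)-\lambda(\phi_\eb,f(u)u)-\lambda(\phi_\eb,gu).$$
The first and third summands are the source of dissipativity: assumptions \eqref{0.1} yield the pointwise bound $f(u)u\ge\frac12 F(u)-C$ (since $f_0(u)u\ge F_0(u)\ge0$ by monotonicity, $|\psi(u)u|\le C|u|$, and $F(u)\ge\beta|u|^2-C$), whence
$$-\lambda(\phi_\eb,|\Nx u|^2)-\lambda(\phi_\eb,f(u)u)\le -\sigma\((F(u),\phi_\eb)+\frac12(|\Nx u|^2,\phi_\eb)\)+C\lambda\eb^{-3},$$
after recalling $\|\phi_\eb\|_{L^1}\sim\eb^{-3}$. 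The cross term and the forcing term are harmless: using $|\Nx\phi_\eb|\le C\eb\phi_\eb$ from \eqref{0.6} they are absorbed into the good quadratic terms plus a contribution $C\eb^{-3}(\|g\|_{L^6_b}^2+1)$.

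Carrying the remaining terms exactly as in the proof of Theorem \ref{Th0.1} (Lemmas \ref{Lem0.1} and \ref{Lem0.2} and the manipulation leading to \eqref{0.good}) then produces
$$\frac d{dt}E_{\phi_\eb}(t)+\sigma E_{\phi_\eb}(t)+\beta\|\Nx\mu\|_{L^2_{\phi_\eb}}^2\le C\eb^2E_{\phi_\eb}(t)+C\eb^5[E_{\phi_\eb}(t)]^2+C\eb^{-3}(\|g\|_{L^6_b}^2+1),$$
whose only difference from \eqref{0.good} is the dissipative term $+\sigma E_{\phi_\eb}$; for $\eb$ small $C\eb^2E_{\phi_\eb}$ is absorbed, leaving an effective rate $\sigma_0>0$. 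The core difficulty, and the reason \eqref{0.good} alone only gave polynomial-in-time growth, is the quadratic term $C\eb^5E^2$: with $\eb$ fixed it forces control on finite intervals only, and any $\eb$ small enough to dominate large initial data leaves the asymptotic value $\eb^{-3}$-dependent, hence not dissipative.

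To resolve this I would let $\eb=\eb(t)$ increase in time and work with $V(t):=\eb(t)^3E_{\phi_{\eb(t)}}(t)$. Differentiating and using \eqref{ebt} to bound the explicit time-derivative of the weight by $\gamma\frac{|\eb'|}{\eb}\phi_{\eb(t)}$, the inequality above rescales to
$$\frac{dV}{dt}+\(\sigma_0-c\,\frac{\eb'(t)}{\eb(t)}\)V\le C\eb(t)^2V^2+C(\|g\|_{L^6_b}^2+1)$$
for an absolute constant $c$. I would then prescribe $\eb(t)$ to grow exponentially but slowly, $\eb'/\eb\le\sigma_0/(2c)$, starting from a small $\eb_0$ chosen so that $V(0)=\eb_0^3E_{\phi_{\eb_0}}(0)$ satisfies $C\eb_0^2V(0)\le\sigma_0/4$ (forcing $\eb_0\sim E_{\phi_{\eb_0}}(0)^{-1/5}$, i.e. $\eb_0$ depends on the initial data), and saturating at a fixed value $\eb_*\sim(\|g\|_{L^6_b}^2+1)^{-1/2}$ depending on $g$ only. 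The point is to maintain the invariant $C\eb(t)^2V(t)\le\sigma_0/4$ along the trajectory: while it holds the quadratic term is subordinate and $\frac{dV}{dt}+\frac{\sigma_0}{4}V\le C(\|g\|_{L^6_b}^2+1)$, so $V$ decays exponentially toward $O((\|g\|_{L^6_b}^2+1)/\sigma_0)$, which by the choice of $\eb_*$ is consistent with the invariant once $\eb$ has saturated; a continuity/bootstrap argument shows the invariant is never violated.

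The main obstacle I anticipate is precisely this balancing act between the growth rate of $\eb(t)$ and the decay rate $\sigma_0$: one must check that the slowly increasing weight never lets $V$ leave the region where the linear damping beats the Riccati nonlinearity, and that $\eb_*$ can be taken to depend on $\|g\|_{L^6_b}$ alone. Granting this, for $t$ past the saturation time $t_*\sim\ln(\eb_*/\eb_0)$ the parameter equals the fixed $\eb_*$ and $V$ obeys a standard dissipative Riccati bound, while for $t\le t_*$ one has $E_{\phi_{\eb(t),x_0}}(t)=\eb(t)^{-3}V(t)$ with $\eb(t)^{-3}$ interpolating between $\eb_0^{-3}$ and $\eb_*^{-3}$. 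Taking the supremum over $x_0\in\R^3$ and invoking \eqref{2.w-un} converts these weighted bounds into the uniformly local estimate \eqref{thd.1}, the factor $e^{-\sigma t}$ being inherited from the exponential decay of $V$ and the two terms $Q(\|g\|_{L^6_b})$ and $Q(\|u(0)\|_{W^{1,2}_b}^2+\|F(u(0))\|_{L^1_b})$ reflecting the asymptotic value and the datum $V(0)$ respectively. Existence of the solution itself follows, as in Theorem \ref{Th0.1}, by approximation of $(u_0,g)$ by finite-energy data and passage to the limit.
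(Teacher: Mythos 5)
Your proposal is correct and follows essentially the same route as the paper: extract $-\lambda E_{\phi_\eb}$ from the Oono term via $F(u)\le f(u)u+C$, run the weighted scheme with a time-dependent, slowly increasing parameter $\eb(t)$ satisfying $|\eb'(t)|/\eb(t)\ll\lambda$ so that $\partial_t\phi_{\eb(t)}$ is absorbed by the dissipation, keep the invariant $\eb(t)^2V_\eb(t)\lesssim\lambda$ so the Riccati term stays subordinate, and let $\eb(t)$ interpolate between a data-dependent initial value and a $g$-dependent saturation value. The paper simply makes your bootstrap explicit by prescribing $\eb(t)=\eb_0\bigl(\tfrac{\lambda/4}{4C_g/\lambda+V_0e^{-\sigma t}}\bigr)^{1/2}$, for which both the invariant and the bound $|\eb'|/\eb\le\sigma/2$ can be verified directly.
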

\begin{proof} The proof of this theorem is analogous to Theorem \ref{Th0.1d}, but the presence of the dissipative term $\lambda u$ produces the extra term $\lambda_0 V_\eb(t)$ in the left-hand side of \eqref{0.last} with some positive $\lambda_0$ independent of $\eb$ and this gives global existence and the dissipative estimate for $V_\eb(t)$ if $\eb=\eb(u_0,g)$ is small enough. However, this is still not enough to deduce the dissipative estimate for $u(t)$ since the parameter $\eb$ still depends on the initial data $u_0$. To overcome this difficulty,
we will consider the  {\it time-dependent} parameter $\eb=\eb(t)$. To be more precise, let $\phi_{\eb,x_0}(x)$ be the same as in \eqref{0.5}. Then, due to \eqref{ebt}, we have

\begin{equation}\label{0.7d}
 |\partial_t \phi_{\eb,x_0}(x)|\le C_t [\phi_{\eb,x_0}(x)], \ \ C_t:=5\cdot\frac{|\eb'(t)|}{\eb(t)}.
\end{equation}
We multiply equation \eqref{CHd} by $\phi_\eb \mu = \phi_{\eb(t), x_0} \mu$, where the function $\eb(t)$ will be specified below, and integrate over $x$. Then, analogously to \eqref{0.7}, we get
\begin{multline}\label{0.7dd}
\frac d{dt}\((F(u),\phi_\eb)+\frac12(|\Nx u|^2,\phi_\eb)+(g,\phi_\eb u)\)+(|\Nx\mu|^2,\phi_\eb)=\\=\frac{1}{2}(|\mu|^2,\Dx\phi_\eb)+(\Nx\mu,\Nx(\Nx\phi_\eb\cdot\Nx u))
 -\\-\lambda(\phi_\eb, |\nabla u|^2+f(u)u+gu)  + (\Dt \phi_\eb, F(u)+\frac12|\nabla u|^2+gu),
\end{multline}
where the extra two terms in the right-hand side are due to the extra term $\lambda u$ and the dependence of $\phi_\eb$ on time. Note that the assumptions \eqref{0.3} imply that
$$
F(u)\le f(u)u+C
$$
for some constant $C$ and, therefore,
\begin{equation*}
-\lambda(\phi_\eb, |\nabla u|^2+f(u)u+gu)\le -\lambda E_{\phi_\eb}(t)+C\eb^{-3}(1+\|g\|^2_{L^6_b}),
\end{equation*}
where $E_{\phi_\eb}(t)$ is defined by \eqref{0.wen}. Furthermore, using \eqref{0.7d} together with \eqref{0.weq}, we have
$$
(\Dt \phi_\eb, F(u)+\frac12|\nabla u|^2+gu)\le 2C_t E_{\phi_\eb}+C(C_t+1)\eb^{-3}(1+\|g\|^2_{L^6_b}).
$$
Thus, the extra terms are estimated as follows
\begin{equation}\label{extra}
-\lambda(\phi_\eb, |\nabla u|^2+f(u)u+gu)  + (\Dt \phi_\eb, F(u)+\frac12|\nabla u|^2+gu)\le -\lambda/2 E_{\phi_\eb}(t)+C\eb^{-1}(1+\|g\|^2_{L^6_b})
\end{equation}
if the parameter $\eb(t)$ satisfies the following extra condition:
\begin{equation}\label{ebsprime}
C_t=5\frac{|\eb'(t)|}{\eb(t)}\le \frac\lambda2
\end{equation}
which is assumed to be satisfied from now on. The rest of the terms in \eqref{0.7dd} can be estimated using Lemmas \ref{Lem0.1} and \ref{Lem0.2}
exactly as in the proof of Theorem \ref{Th0.1} which gives the following dissipative analogue of \eqref{0.good}:
\begin{equation}\label{0.goodd}
\frac d{dt}E_{\phi_\eb}(t)+\frac\lambda2 E_{\phi_\eb}(t)+\beta\|\Nx\mu\|_{L^2_{\phi_\eb}}^2\le C\eb^2 E_{\phi_\eb}(t)+C\eb^5[E_{\phi_\eb}(t)]^2+C\eb^{-3}(\|g\|_{L^6_b}^2+1).
\end{equation}
Leaving \eqref{0.ubb} unchanged and again using that $\eb^2 y\le \eb^5y^2+\eb^{-1}$, we see that the function $V_\eb(t):=\eb^{3}E_{\phi_\eb}(t)$ solves the dissipative analogue of  inequality \eqref{0.last}:
\begin{equation}\label{0.lastd}
\frac d{dt}V_\eb+ \frac\lambda2 V_\eb + \beta\eb^{3}\|\Nx \mu\|^2_{L^2_{\phi_\eb}}\le \eb^2V_\eb^2+C(\|g\|_{L^6_b}^2+1),\ V_\eb(0)\le C(1+\|g\|_{L^6_b}^2+\|u_0\|_{\Phi_b}):=V_0,
\end{equation}
where the constant $C$ is independent of $\eb$ and $\|u_0\|_{\Phi_b}$ is defined by \eqref{phase}.
\par
We claim that inequality \eqref{0.lastd} is enough to deduce the desired dissipative estimate \eqref{thd.1} and finish the proof of the theorem. Indeed,
restyling it as
\begin{equation}\label{0.lastdd}
 \frac d{dt}V_\eb+ \frac{\lambda}{4} V_\eb \le V_\eb \left(\eb^2V_\eb - \frac{\lambda}{4}\right) +C_g,
\end{equation}
where $C_g = C(1+\|g\|_{L^6_b}^2)$, we see that, under the assumption
\begin{equation}\label{0.diss}
 \eb^2(t)V_\eb(t) \leq \frac{\lambda}{4},\ \ t\ge0,
\end{equation}
the first term in the right-hand side of \eqref{0.lastdd} will be negative (and, therefore, can be omitted) and
$V_\eb(t)$ will satisfy the estimate
\begin{equation}\label{0.niced}
 V_\eb(t) \leq \frac{4 C_g}{\lambda} + V_0 e^{\frac{-\lambda}{4}t},\ \ t\ge0.
\end{equation}
Using this observation, it is not difficult to show that both estimates \eqref{0.niced} and \eqref{0.diss}
 will be satisfied if the parameter $\eb(t)$ is chosen in such way that
\begin{equation}\label{epsdef.1}
\eb^2(t)\left( \frac{4C_g}{\lambda} + V_0e^{\frac{-\lambda}{4}t}  \right) \le \frac{\lambda}{4}.
\end{equation}
Thus, we only need to fix the function $\eb(t)\ll1$ satisfying the two inequalities  \eqref{ebsprime} and \eqref{epsdef.1}. In
particular, we may take
\begin{equation}\label{epsdef.2}
 \eb(t) = \eb_0\(\frac{\lambda/4}{\frac{4C_g}{\lambda} + V_0e^{-\sigma t}}\)^{\frac12},
\end{equation}
where $\eb_0>0$ and $\sigma>0$ are proper small constants. Indeed, condition \eqref{epsdef.1}
will be satisfied if $\eb\le1$ and $\sigma\le \lambda/4$. In order to check \eqref{ebsprime}, we note that
\begin{equation}\label{epsdef.3}
\frac{|\eb'(t)|}{\eb(t)}=|\frac d{dt}\log\eb(t)|=\frac12\cdot \frac{V_0\sigma e^{-\sigma t}}{\frac{4C_g}{\lambda} + V_0e^{-\sigma t}}\le \frac12\sigma
\end{equation}
and \eqref{ebsprime} will also be satisfied if $\sigma\le \lambda/5$. Thus, for that choice of $\eb(t)$ estimate
\eqref{0.niced} is satisfied and, therefore,
\begin{equation}\label{0.eestd}
E_{\phi_{\eb(t),x_0}}(t)\le \eb(t)^{-3}V_\eb(t)\le
C\(C_g + V_0e^{-\sigma t}\)^{\frac32}
\(C_g + V_0 e^{-\sigma t}\)
\end{equation}
uniformly with respect to $x_0\in\R^3$. Taking the supremum with respect to $x_0\in\R^3$ from both sides of \eqref{0.eestd} and using \eqref{2.w-un}, we finally arrive at
\begin{equation}\label{0.bd}
\|u(t)\|^2_{W^{1,2}_b}+\|F(u(t))\|_{L^1_b}\le
Q(\|g\|_{L^6_b}) + Q(\|u(0)\|^2_{W^{1, 2}_b} + \|F(u(0))\|_{L^1_b})e^{-\sigma t}
\end{equation}
for the properly chosen monotone function $Q$ and positive constant $\sigma$.
Estimate \eqref{0.bd} together with \eqref{0.lastd}
(which we need for estimating the gradient of $\mu$) implies \eqref{thd.1} and finishes the proof of the theorem.
\end{proof}
Let us now discuss the uniqueness and further regularity of solutions for the case of the Cahn-Hilliard-Oono equation.

\begin{proposition}\label{Propd.unique} Let the assumptions of Theorem \ref{Th0.1d} hold and, in addition, \eqref{1.psi} be satisfied. Then the solution $u(t)$ constructed in Theorem \ref{Th0.1d} is unique and, for every two solutions $u_1(t)$ and $u_2(t)$ of the Cahn-Hilliard-Oono equation, estimate \eqref{1.2} holds.
\end{proposition}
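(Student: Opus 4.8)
The plan is to repeat the $H^{-1}$-weighted energy argument of Theorem \ref{Th1.unique} essentially verbatim, tracking only the new contributions produced by the extra dissipative term $-\lambda u$. First I would set $v:=u_1-u_2$ and observe that it now solves
\[
\Dt v=\Dx(-\Dx v+l(t)v)-\lambda v,\qquad l(t):=\int_0^1 f'(su_1+(1-s)u_2)\,ds,
\]
which is exactly equation \eqref{1.3} with the additional term $-\lambda v$ on the right-hand side. As before I would multiply by the test function $\varphi_\eb v-\Nx\varphi_\eb\cdot\Nx w$, where $w:=(-\Dx+1)^{-1}v$ and $\varphi_\eb=\varphi_{\eb,x_0}$ is the exponential weight, and integrate over $x$. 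Every term already present in \eqref{1.4} is treated precisely as in the proof of Theorem \ref{Th1.unique}, and the bound \eqref{1.lest} on $\|l(t)\|_{L^1_b}$ remains available, since the dissipative estimate \eqref{thd.1} controls $\|F(u_i)\|_{L^1_b}$.

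The only genuinely new task is to control the two contributions coming from $-\lambda v$, namely $-\lambda(\varphi_\eb v,v)$ and $\lambda(v,\Nx\varphi_\eb\cdot\Nx w)$. The first equals $-\lambda\|v\|^2_{L^2_{\varphi_\eb}}\le0$, is therefore favorable, and may simply be dropped. For the second I would use the weight bound \eqref{1.weight}, which supplies a factor $\eb$ through $|\Nx\varphi_\eb|\le C\eb\varphi_\eb$, so that
\[
|\lambda(v,\Nx\varphi_\eb\cdot\Nx w)|\le C\lambda\eb\,\|v\|_{L^2_{\varphi_\eb}}\|\Nx w\|_{L^2_{\varphi_\eb}}\le\frac14\|v\|^2_{L^2_{\varphi_\eb}}+C\lambda^2\eb^2\|w\|^2_{W^{1,2}_{\varphi_\eb}},
\]
where I have used Young's inequality. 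The first summand is absorbed into the coercive term $\|v\|^2_{W^{1,2}_{\varphi_\eb}}$ appearing on the left-hand side of \eqref{1.small}, while the second is already of the Gronwall form of \eqref{1.fin} and is carried over to the right-hand side. Thus the differential inequality \eqref{1.fin} is reproduced, possibly with an enlarged but still polynomially-in-$T$ bounded constant $C_T$.

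Having recovered \eqref{1.fin}, I would finish exactly as in Theorem \ref{Th1.unique}: apply Gronwall's inequality, take the supremum over $x_0\in\R^3$, and invoke \eqref{2.w-un} to pass from the weighted norm to the uniformly local norm, which yields \eqref{1.2}; uniqueness is then the special case $u_1(0)=u_2(0)$. I do not expect any serious obstacle here, precisely because the extra term is either sign-definite in the right direction or lower order in $\eb$. The one point requiring care is confirming that the new cross term $\lambda(v,\Nx\varphi_\eb\cdot\Nx w)$ is genuinely absorbed by the $\eb$-smallness afforded by \eqref{1.weight} rather than competing with the coercive $W^{1,2}_{\varphi_\eb}$-norm on the left, so that the structure of \eqref{1.fin} is preserved; this is the main, and essentially only, verification to be made.
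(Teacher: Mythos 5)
Your argument is correct and follows exactly the route the paper intends: the paper omits the proof of Proposition \ref{Propd.unique} entirely, stating only that the extra term $\lambda u$ makes no essential difference and that the argument repeats that of Theorem \ref{Th1.unique} word for word. Your explicit verification --- that the new contribution $-\lambda\|v\|^2_{L^2_{\varphi_\eb}}$ has the good sign and that the cross term $\lambda(v,\Nx\varphi_\eb\cdot\Nx w)$ is controlled via \eqref{1.weight} and absorbed without disturbing the structure of \eqref{1.fin} --- is precisely the check the paper leaves to the reader.
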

Indeed, the presence of the extra term $\lambda u$ in \eqref{CHd} does not make any essential difference for the uniqueness proof which repeats almost word by word the proof of Theorem \ref{Th1.unique} and by this reason is omitted.
\par
The following corollary is the dissipative analogue of Corollary \ref{Cor1.dt}.

\begin{corollary}\label{Cor2.dt} Let the assumptions of Proposition \ref{Propd.unique} hold and let, in addition, the initial data $u_0$ be such that $\Dt u(0)\in W^{-1,2}_b(\R^3)$. Then, $\Dt u(t)\in W^{-1,2}_b(\R^3)$ for all $t>0$ and the analogue of dissipative estimate \eqref{thd.1} is valid:
\begin{equation}\label{dtdis}
\|\Dt u(t)\|_{W^{-1,2}_b}\le Q(\|\Dt u(0)\|_{W^{-1,2}_b}+\|u(0)\|_{\Phi_b})e^{-\gamma t}+Q(\|g\|_{L^6_b})
\end{equation}
for  proper monotone function $Q$ and positive constant $\gamma$. Moreover, if $\Dt u(0)\notin W^{-1,2}_b(\R^3)$ then, nevertheless, $\Dt u(t)\in W^{-1,2}_b(\R^3)$ and the following estimate holds:
\begin{equation}\label{3.dtsm}
\|\Dt u(t)\|_{W^{-1,2}_b}\le C t^{-1/2}Q(\|u(0)\|_{\Phi_b}+\|g\|_{L^6_b}),\ \ t\in(0,1]
\end{equation}
for some monotone increasing function $Q$ and positive $C$.
\end{corollary}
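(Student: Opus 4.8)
The plan is to imitate the proof of Corollary \ref{Cor1.dt}: differentiate the Cahn--Hilliard--Oono equation \eqref{CHd} in time, derive a weighted $H^{-1}$ differential inequality for $v:=\Dt u$ in the spirit of \eqref{1.dteq}, and then combine it with the dissipative counterpart of the regularity estimate \eqref{0.reg}. Setting $v=\Dt u$, the function $v$ solves
\begin{equation*}
\Dt v=\Dx(-\Dx v+f'(u)v)-\lambda v,\qquad v\big|_{t=0}=\Dt u(0),
\end{equation*}
which is \eqref{1.diff} with the additional dissipative term $-\lambda v$. I would set $w:=(-\Dx+1)^{-1}v$ and test with $\varphi_{\eb(t)}v-\Nx\varphi_{\eb(t)}\cdot\Nx w$, using the time-dependent exponential weight $\varphi_{\eb(t),x_0}$ of Section \ref{s4} rather than a fixed one.

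Running the computation that led from \eqref{1.3} to \eqref{1.dteq}, two new contributions appear. The term $-\lambda v$, after the $\Nx\varphi_{\eb}$-errors are absorbed via \eqref{1.weight} for $\eb\ll1$, adds a genuinely dissipative quantity of order $\lambda\|v\|^2_{L^2_{\varphi_\eb}}\ge\lambda\|w\|^2_{W^{1,2}_{\varphi_\eb}}$ to the left-hand side, while the time dependence of the weight produces, by \eqref{ebt}--\eqref{ebsprime}, an extra term controlled by $(|\eb'|/\eb)\|w\|^2_{W^{1,2}_{\varphi_\eb}}$. The reaction coefficient is handled exactly as in \eqref{1.nest} and \eqref{1.lest}: assumption \eqref{1.psi} yields $\|f'(u(t))\|_{L^1_b}\le C(\|F(u(t))\|_{L^1_b}+1)$, which by Theorem \ref{Th0.1d} is bounded, dissipatively, in terms of $\|g\|_{L^6_b}$ and the data. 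The outcome is the dissipative analogue of \eqref{1.dteq}, a differential inequality of the form $\frac{d}{dt}\|\Dt u\|^2_{W^{-1,2}_{\varphi_{\eb(t)}}}+\gamma_0\|\Dt u\|^2_{W^{-1,2}_{\varphi_{\eb(t)}}}\le C(t)\|\Dt u\|^2_{W^{-1,2}_{\varphi_{\eb(t)}}}$, with $\gamma_0>0$ coming from $\lambda$ and $C(t)$ controlled over unit time intervals.

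To close the argument I would combine this inequality with the dissipative counterpart of \eqref{0.reg} --- obtained from \eqref{thd.1} and the identity $\Dt u=\Dx\mu-\lambda u$ exactly as \eqref{0.reg} was deduced from \eqref{0.4} --- which furnishes a dissipative bound on $\|\Dt u\|_{L^2_b([t,t+1],W^{-1,2}_b)}$. On each unit interval $[t-1,t]$ one selects a time $s$ at which $\|\Dt u(s)\|_{W^{-1,2}_b}$ is dominated by this time-integrated bound, and then the differential inequality propagates the estimate from $s$ to $t$; taking the supremum over $x_0$ and invoking \eqref{2.w-un} yields the pointwise dissipative bound \eqref{dtdis} (for $t\in[0,1]$ with $\Dt u(0)\in W^{-1,2}_b$ one instead runs the inequality directly from $t=0$). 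The same mechanism on the interval $(0,t]\subset(0,1]$, with $s$ chosen so that $\|\Dt u(s)\|^2_{W^{-1,2}_b}\lesssim s^{-1}\|\Dt u\|^2_{L^2_b}$, produces the smoothing factor $t^{-1/2}$ of \eqref{3.dtsm}.

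The main obstacle is the size of the reaction coefficient $C(t)$, which through \eqref{1.nest} is governed by $\|f'(u(t))\|_{L^1_b}\sim\|F(u(t))\|_{L^1_b}$ and is large for large data or large $\|g\|_{L^6_b}$; in particular the $\lambda$-dissipation does not pointwise dominate it, so one cannot simply read off exponential decay from the differential inequality alone. The resolution --- and the reason the dissipativity of $u$ and the time-dependent weight are both needed --- is to let the genuine decay come from the dissipative time-integrated regularity of $\Dt u$ inherited from \eqref{thd.1}, while the differential inequality is used only as a short-time propagator on unit intervals, where $\int C(t)\,dt$ stays finite. The transient growth accumulated while $u$ is still far from its absorbing set is then absorbed into the data-dependent term $Q(\cdots)e^{-\gamma t}$, precisely as the quadratic term $\eb^2V_\eb^2$ was tamed in \eqref{0.lastd} by the choice \eqref{epsdef.2} of $\eb(t)$.
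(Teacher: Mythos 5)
Your proposal is correct and follows essentially the same route as the paper: a weighted $H^{-1}$ Gronwall inequality for $\Dt u$ used only as a short-time propagator on unit intervals, with the genuine exponential decay inherited from the time-integrated dissipative bound on $\Nx\mu$ (hence on $\Dt u$ in $W^{-1,2}$) coming from \eqref{thd.1}. The paper's only cosmetic differences are that it does not carry the (ultimately unneeded) $-\lambda v$ dissipation or the time-dependent weight, working directly with the inequality \eqref{3.dtgood} inherited from Corollary \ref{Cor1.dt}, and that it replaces your mean-value selection of a good time $s$ by multiplying that inequality by $t$ and integrating, which yields the $t^{-1/2}$ smoothing and then the dissipative bound on shifted unit intervals.
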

\begin{proof} Indeed, arguing exactly as in Corollary \ref{Cor1.dt}, we end up with estimate
\begin{equation}\label{3.dtgood}
\frac d{dt}\|\Dt u(t)\|_{\varphi_\eb}^2\le Q(\|u(t)\|_{\Phi_b})\|\Dt u(t)\|^2_{L^2_{\varphi_\eb}},
\end{equation}
where $Q(z)=C(1+z)^8$, see \eqref{1.lest} and \eqref{1.small}. Multiplying this inequality by $t$ and integrating in time, we get
$$
t\|\Dt u(t)\|^2_{W^{-1,2}_{\varphi_{\eb,x_0}}}\le (Q(\|u(t)\|_{\Phi_b})+1)\int_0^t\|\Nx\mu(t)\|_{L^2_{\varphi_{\eb,x_0}}}^2\,dt,\ \  t\in(0,1].
$$
Taking the supremum over all shifts $x_0\in\R^3$ and using \eqref{2.w-un} together with \eqref{thd.1}, we have
$$
\|\Dt u(t)\|^2_{W^{-1,2}_b}\le t^{-1}Q(\|u(0)\|_{\Phi_b}+\|g\|_{L^6_b})
$$
for some new monotone function $Q$. Thus, \eqref{3.dtsm} is verified. In addition, the last estimate gives that
$$
\|\Dt u(t+1)\|_{W^{-1,2}_b}\le Q(\|u(t)\|_{W^{-1,2}_b}+\|g\|_{L^6_b})
$$
which together with \eqref{thd.1} proves also the dissipative estimate \eqref{dtdis} for $t\ge1$. Finally, estimate \eqref{dtdis} on the finite time interval $t\in[0,1]$ follows directly from the Gronwall inequality applied to \eqref{3.dtgood} and the corollary is proved.
\end{proof}
Furthermore, the dissipative analogue of Corollary \ref{Cor1.w26} also holds.

\begin{corollary}\label{Cor.w26d} Let the assumptions of Proposition \ref{Propd.unique} hold and let, in addition $u_0\in W^{2,6}_b(\R^3)$. Then, $u(t)\in W^{2,6}_b(\R^3)$ for all $t>0$ and the analogue of \eqref{dtdis} holds. If $u_0\notin W^{2,6}_b(\R^3)$ then, nevertheless, $u(t)\in W^{2,6}_b(\R^3)$ for all $t>0$ and the analog of smoothing property \eqref{3.dtsm} also holds.
\end{corollary}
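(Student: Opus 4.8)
The plan is to repeat, almost verbatim, the bootstrap of Corollary \ref{Cor1.w26}, now feeding in the \emph{dissipative} estimates for $\Dt u$ and for $\|u\|_{\Phi_b}$ supplied by Corollary \ref{Cor2.dt} and Theorem \ref{Th0.1d} in place of their polynomially-growing analogues, and keeping track of the extra term $\lambda u$. First I would rewrite the Cahn-Hilliard-Oono equation \eqref{CHd} in the form
\[
\mu=-(-\Dx+1)^{-1}(\Dt u+\lambda u)+(-\Dx+1)^{-1}\mu,
\]
which follows at once from the identity $\Dx\mu=\Dt u+\lambda u$. Applying the maximal regularity of the Laplacian in the uniformly local spaces exactly as in \eqref{1.mureg} then gives
\[
\|\mu(t)\|_{W^{1,2}_b}\le C\|\Dt u(t)\|_{W^{-1,2}_b}+C\lambda\|u(t)\|_{W^{-1,2}_b}+C\|\mu(t)\|_{W^{-1,2}_b}.
\]
The new middle term is harmless: $\|u(t)\|_{W^{-1,2}_b}\le\|u(t)\|_{W^{1,2}_b}$ is controlled dissipatively by Theorem \ref{Th0.1d}, while $\|\mu(t)\|_{W^{-1,2}_b}$ is estimated, after interpolation as in \eqref{1.mu06} and through \eqref{1.muweak}, by $\|u(t)\|_{\Phi_b}+\|g\|_{L^6_b}$, which is again dissipative thanks to \eqref{thd.1}. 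Using the embedding $W^{1,2}_b\hookrightarrow L^6_b$ I obtain the dissipative analogue of \eqref{1.mu06},
\[
\|\mu(t)\|_{L^6_b}\le C\|\Dt u(t)\|_{W^{-1,2}_b}+Q(\|u(t)\|_{\Phi_b}+\|g\|_{L^6_b}).
\]

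Next I would invoke the $L^6_b$-maximal regularity for the semilinear elliptic equation \eqref{0.mu}, whose form is unchanged by the presence of $\lambda u$, to get $\|u(t)\|_{W^{2,6}_b}\le C(1+\|\mu(t)\|_{L^6_b})$, so that the whole problem reduces to the dissipative control of the right-hand side above. Inserting the dissipative bound \eqref{dtdis} for $\|\Dt u(t)\|_{W^{-1,2}_b}$ and the estimate \eqref{thd.1} for $\|u(t)\|_{\Phi_b}$, both of which are of the form $Q(\text{data})e^{-\gamma t}+Q(\|g\|_{L^6_b})$, and using that sums and compositions of such bounds with monotone $Q$ preserve this structure, I arrive at the claimed dissipative estimate
\[
\|u(t)\|_{W^{2,6}_b}\le Q(\|u(0)\|_{W^{2,6}_b}+\|u(0)\|_{\Phi_b})e^{-\gamma t}+Q(\|g\|_{L^6_b}),\ \ t\ge0.
\]

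For the case $u_0\notin W^{2,6}_b(\R^3)$ the same chain applies for every $t>0$, except that the dissipative estimate \eqref{dtdis} must be replaced by the smoothing estimate \eqref{3.dtsm}, which already guarantees $\Dt u(t)\in W^{-1,2}_b$ on $(0,1]$ together with the bound $Ct^{-1/2}Q(\|u(0)\|_{\Phi_b}+\|g\|_{L^6_b})$. Feeding this into $\|u(t)\|_{W^{2,6}_b}\le C(1+\|\mu(t)\|_{L^6_b})$ yields the asserted smoothing property; and since this shows $u(1)\in W^{2,6}_b$, for $t\ge1$ one simply restarts the dissipative estimate of the previous paragraph from $t=1$.

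I do not expect a genuinely hard step here, since the argument is an essentially mechanical transcription of Corollary \ref{Cor1.w26}. The only point requiring care is bookkeeping: one must check that each auxiliary quantity---$\|u\|_{W^{-1,2}_b}$, $\|\mu\|_{W^{-1,2}_b}$, and above all $\|\Dt u\|_{W^{-1,2}_b}$---enters exclusively through one of the already-established \emph{dissipative} estimates (Theorem \ref{Th0.1d} and Corollary \ref{Cor2.dt}), so that the final bound for $\|u\|_{W^{2,6}_b}$ comes out of the dissipative type $Q(\cdot)e^{-\gamma t}+Q(\|g\|_{L^6_b})$ rather than of the polynomially growing type of Section \ref{s3}.
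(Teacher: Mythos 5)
Your argument is correct and is exactly what the paper intends: the paper gives no separate proof of this corollary, presenting it as the dissipative analogue of Corollary \ref{Cor1.w26}, i.e.\ the same chain $\mu=-(-\Dx+1)^{-1}(\Dt u+\lambda u)+(-\Dx+1)^{-1}\mu$, elliptic maximal regularity, and $L^6_b$-regularity for \eqref{0.mu}, with the dissipative inputs from Theorem \ref{Th0.1d} and Corollary \ref{Cor2.dt} replacing the polynomially growing ones. Your handling of the extra term $\lambda u$ and of the smoothing case via \eqref{3.dtsm} followed by restarting at $t=1$ matches the intended argument.
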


\begin{remark}\label{Rem.Oono-dis} As we have noted in Remark \ref{Rem1. nice}, the verified $W^{2,6}$-regularity of solutions
 allows us to obtain further smoothness of solutions (restricted only by the regularity of $g$ and $f$) by standard bootstrapping arguments. In the case of the Cahn-Hilliard-Oono equation, the obtained estimates for the higher norms will be also dissipative.
 \par
 Note also that the proved dissipative estimate \eqref{thd.1} together with the smoothing properties established in Corollaries \ref{Cor2.dt} and \ref{Cor.w26d} allow us to define the dissipative solution semigroup in the phase space $\Phi_b$
 \begin{equation}\label{2.semigroup}
 S(t): \Phi_b\to\Phi_b,\ \ S(t)u_0=u(t),\ \ \Phi_b:=\{u_0\in W^{1,2}_b(\R^3),\ F(u_0)\in L^1_b(\R^3)\}
 \end{equation}
and verify that this semigroup possesses and absorbing set bounded in $W^{2,6}_b(\R^3)$. This, together with the Lipschitz continuity \eqref{1.2} allows us, in turn, to establish the existence of the so-called locally compact global attractor $\mathcal A\subset W^{2,6}_b(\R^3)$
(see \cite{MZ} for more details)
for the solution semigroup \eqref{2.semigroup} associated with the Cahn-Hilliard-Oono equation. After that one can also study the upper and lower bounds for its Kolmogorov's $\eb$-entropy, etc. Since all these things are more or less straightforward nowadays (when the key dissipative estimate is obtained, of course, see \cite{Ba,BV1,MZ,Te,Z1,Z2,Z3,Z4} and references therein), we prefer not to give more details here.
\end{remark}

\section{Cahn-Hilliard equation with singular potentials}\label{s5}

In the previous sections, we have considered the case when the nonlinearity is {\it regular} $f\in C^2(\R)$.
In this section, we briefly consider the case of the so-called {\it singular} potentials where the nonlinearity $f$
is defined on the interval $(-1,1)$ only and has singularities at $u=\pm1$, a situation which is currently of great interest, see \cite{CMZ,EKZ,EMZ1}
and references therein. The typical example here is the so-called logarithmic potential
\begin{equation}\label{u.log}
f(u)=\log\frac{1-u}{1+u}-\alpha u
\end{equation}
or the polynomial singularity
\begin{equation}\label{u.pk}
f(u)=\frac u{(1-u^2)^l}-\alpha u,
\end{equation}
where $l>0$.
\par
In this case, it is additionally assumed that the solution $u(t,x)$ is always inbetween minus and plus one:
\begin{equation}\label{using}
-1<u(t,x)<1\ \ \text{for almost all}\ \ (t,x)\in\R_+\times\R^3
\end{equation}
and therefore $f(u(t,x))$ has a sense.
\par
Following \cite{EKZ} (see also \cite{CMZ,D,MZ1} and references therein), we assume that the nonlinearity $f$ satisfies
\begin{equation}\label{fsing}
\begin{cases}
1.\ \ f\in C^2(-1,1),\ \ f(0)=0;\\
2.\ \ \lim_{u\pm\infty}f(u)=\pm\infty;\\
3.\ \ \lim_{u\pm\infty}f'(u)=+\infty\\
\end{cases}
\end{equation}
and, exactly as in the case of regular potentials, we assume that $g\in L^6_b(\R^3)$.
\par
However, in contrast to the case of bounded or cylindrical domains, assumptions \eqref{fsing} look insufficient to derive the key a
priori estimate (at least using the method developed above). Indeed, the third assumption of \eqref{0.1} which connects the growth
rate of $f(u)$ and its antiderivative $F(u)$ has been essential in the derivation of that estimate. But this assumption is clearly
wrong for the case of singular potentials where $f(u)$ is growing {\it faster} than $F(u)$ as $u\to\pm1$. In particular,
for the case of nonlinearity  \eqref{u.log} as well as nonlinearity \eqref{u.pk} with $l<1$, the potential $F(u)$ is
bounded near $u=\pm1$, so $f(u)$ cannot be reasonably estimated through $F(u)$ near the singularities and,
by this reason, we are unable to treat these cases. But if $l>1$, the nonlinearity \eqref{u.pk} obviously satisfies
\begin{equation}\label{f-st-sing}
|f(u)|\le \beta|F(u)|^\kappa+C
\end{equation}
for some positive $\beta$ and $C$ and some $\kappa\in(1,\infty)$ (for \eqref{u.pk}, we have $\kappa=1+\frac1{l-1}$). As shown in the next theorem this assumption is enough in order to obtain the analogues of theorems \ref{Th0.1} and \ref{Th0.1d} for the case of singular potentials.

\begin{theorem}\label{Th.critical} Let the assumptions \eqref{fsing} and \eqref{f-st-sing} hold and $g\in L^6_b(\R^3)$. Then, for every $u_0\in\Phi_b$, the Cahn-Hilliard equation \eqref{CH} possesses at least one global solution $u(t)$, $t\ge0$, (in the sense of \eqref{0.3} plus the extra assumption \eqref{using}) which satisfies the following analogue of \eqref{0.b}:
\begin{multline}\label{0.bsing}
\|u(t)\|_{W^{1,2}_b}^2+\|F(u(t))\|_{L^1_b}+\|\Nx\mu\|_{L^2_b([0,t]\times\R^3)}^2\le\\\le C(1+ t^{3\kappa+1})\(1+\|g\|_{L^6_b}^2+ \|u_0\|_{W^{1,2}_b}^2+\|F(u_0)\|_{L^1_b}\)^{3\kappa-1/2},
\end{multline}
where $\kappa$ is the same as in assumption \eqref{f-st-sing}.
\end{theorem}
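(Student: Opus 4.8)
The plan is to adapt the weighted energy method of Theorem \ref{Th0.1} to the singular setting, the essential difference being that the third assumption of \eqref{0.1}, which controls $|f(u)|$ by $|F(u)|$ linearly, is now replaced by the weaker nonlinear bound \eqref{f-st-sing}. I would begin with exactly the same multiplier structure: multiply \eqref{CH} by $\phi_\eb\mu$ with the polynomial weight $\phi_{\eb,x_0}$ of \eqref{0.5}, obtaining the identity \eqref{0.7}, and introduce the weighted energy $E_{\phi_\eb}(t)$ as in \eqref{0.wen}. Lemma \ref{Lem0.2} carries over unchanged since its proof uses only the monotonicity of $f_0$ and the weight estimate \eqref{0.6}, not the growth comparison \eqref{f-st-sing}; the singular analogue of the decomposition $f=f_0+\psi$ with $f_0'\ge 0$ should still be available because of the third assumption of \eqref{fsing}. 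Thus I expect to reach the analogue of inequality \eqref{0.better}, with the same left-hand side containing $(\phi_\eb^3,|f(u)|^6)^{1/3}$ and the same dangerous term $C\eb^2(\phi_\eb^{7/5},|f(u)|^2)$ on the right.

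The heart of the argument, and the place where $\kappa$ enters, is the interpolation step \eqref{0.interp}. Previously one interpolated $(\phi_\eb^{7/5},|f(u)|^2)$ between $L^1_{\phi_\eb}$ and $L^6_{\phi_\eb^3}$ and then used the \emph{linear} bound $(\phi_\eb,|f(u)|)\le \alpha(\phi_\eb,|F(u)|)+C\le \alpha E_{\phi_\eb}+C$ to close in terms of $E_{\phi_\eb}$. With only \eqref{f-st-sing} available, the $L^1_{\phi_\eb}$-factor $(\phi_\eb,|f(u)|)$ is now controlled by $(\phi_\eb,|F(u)|^\kappa)$ rather than by $E_{\phi_\eb}$ itself. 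I would therefore run the H\"older interpolation so that the $L^1_{\phi_\eb^3}$-type factor is again absorbed by $\beta(\phi_\eb^3,|f(u)|^6)^{1/3}$ on the left, while the remaining factor is a power of $(\phi_\eb,|F(u)|^\kappa)$. To turn this into something closable, I would further estimate $(\phi_\eb,|F(u)|^\kappa)$ using the uniformly local structure: since $F(u)\in L^1_b$ one has, via \eqref{1.mainest}, that this quantity is bounded by a power of $E_{\phi_\eb}(t)$ together with the explicit $\eb$-powers coming from the extra mass of $\phi_\eb^\kappa$. The upshot should be a differential inequality of the schematic form
\begin{equation*}
\frac{d}{dt}E_{\phi_\eb}(t)+\beta\|\Nx\mu\|^2_{L^2_{\phi_\eb}}\le C\eb^2 E_{\phi_\eb}(t)+C\eb^{a}[E_{\phi_\eb}(t)]^{\kappa+1}+C\eb^{-3}(\|g\|^2_{L^6_b}+1),
\end{equation*}
which generalizes \eqref{0.good} with the quadratic term replaced by a $(\kappa+1)$-power; note $\kappa=1$ recovers the regular case.

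From here the scaling bookkeeping of Theorem \ref{Th0.1} repeats with adjusted exponents. Setting $V_\eb(t):=\eb^3 E_{\phi_\eb}(t)$ as before, the superlinear term becomes $\eb^{b}V_\eb^{\kappa+1}$ for an appropriate $b$ depending on $\kappa$, and one again chooses $\eb=\eb(T,u_0,g)$ small enough that the self-improving condition analogous to \eqref{0.eb} holds on $[0,T]$; solving for $\eb$ as in \eqref{0.ebb} produces the modified powers of $(1+T)$ and of the data, which is precisely what yields the exponents $3\kappa+1$ and $3\kappa-1/2$ in \eqref{0.bsing}. Taking the supremum over $x_0$ and invoking \eqref{2.w-un} then gives the stated bound. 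The main obstacle I anticipate is the interpolation/absorption step: one must verify that the $L^6_{\phi_\eb^3}$-power extracted from $(\phi_\eb^{7/5},|f(u)|^2)$ is small enough (its exponent strictly below $1$) to be absorbed into $\beta(\phi_\eb^3,|f(u)|^6)^{1/3}$ for \emph{every} $\kappa\in(1,\infty)$, and simultaneously that the leftover $F(u)$-power combines with the correct $\eb$-weights to produce a clean $[E_{\phi_\eb}]^{\kappa+1}$ term; checking that the H\"older exponents remain admissible across the whole range of $\kappa$, and confirming that the final $\eb$-optimization does not break down, is where the real care is needed. A secondary point worth confirming is that assumption \eqref{using}, $-1<u<1$, is propagated by the approximation scheme, so that $f(u)$ is well-defined throughout the limiting procedure.
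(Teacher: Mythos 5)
Your overall architecture (the same multiplier $\phi_\eb\mu$, the same weighted energy $E_{\phi_\eb}$ from \eqref{0.wen}, Lemmas \ref{Lem0.1} and \ref{Lem0.2} surviving unchanged, and then an ODE for $V_\eb=\eb^3E_{\phi_\eb}$ closed by choosing $\eb=\eb(T,u_0,g)$) is exactly the paper's, and you correctly locate the only genuinely new difficulty in the interpolation step. But the way you propose to resolve that step does not work, and this is a real gap rather than a detail of bookkeeping. You keep the lower endpoint $(\phi_\eb,|f(u)|)$ from \eqref{0.interp} and then bound it by $(\phi_\eb,|F(u)|^\kappa)$ via \eqref{f-st-sing}, hoping to dominate the latter by a power of $E_{\phi_\eb}(t)$. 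That quantity is \emph{not} controlled by $E_{\phi_\eb}$: for $\kappa>1$ Jensen's inequality gives $(\phi_\eb,|F|^\kappa)\ge \|\phi_\eb\|_{L^1}^{1-\kappa}(\phi_\eb,|F|)^\kappa$, an inequality in the wrong direction, and no reverse bound holds without an $L^\infty$ or $L^1_b$ control of $F(u(t))$ --- which is precisely what the theorem is trying to establish, so invoking $F(u)\in L^1_b$ together with \eqref{1.mainest} at this point is circular. Relatedly, your predicted superlinear power $[E_{\phi_\eb}]^{\kappa+1}$ is incompatible with the stated exponents: running your own $\eb$-optimization with a term $\eb^2V_\eb^{p}$ yields the factors $(T+1)^{3p/2+1}$ and $(\cdot)^{(3p-1)/2}$, so matching $3\kappa+1$ and $3\kappa-1/2$ forces $p=2\kappa$, not $\kappa+1$.

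The paper's resolution is to move the \emph{lower} interpolation endpoint from $L^1_{\phi_\eb}$ to $L^{1/\kappa}_{\phi_\eb}$, i.e.\ to split $|f(u)|^2$ into powers of $\phi_\eb|f(u)|^{1/\kappa}$ and $\phi_\eb^3|f(u)|^6$; then \eqref{f-st-sing} gives $(\phi_\eb,|f(u)|^{1/\kappa})\le C(\phi_\eb,|F(u)|)+C\eb^{-3}\le CE_{\phi_\eb}+C\eb^{-3}$, which does close, and Young's inequality produces exactly the term $C\eb^{6\kappa-1}[E_{\phi_\eb}]^{2\kappa}$, see \eqref{intnew} and \eqref{0.ggood}. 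For the H\"older exponents in this splitting to be admissible one needs the weight exponent in the dangerous term to satisfy $1+2/\gamma=\frac{4\kappa}{6\kappa-1}+3\cdot\frac{2\kappa-1}{6\kappa-1}$, which forces replacing the fixed weight \eqref{0.5} (with $\gamma=5$) by the weight of \eqref{weight} with $\gamma=3+\frac{2}{2\kappa-1}$; your plan of keeping $\gamma=5$ and the exponent $7/5$ is therefore a second point at which the argument would not go through for general $\kappa\in(1,\infty)$. Your closing remark about propagating $-1<u<1$ through the approximation is a fair concern and is dealt with in the paper by the standard approximation arguments it cites.
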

\begin{proof} As in the proof of Theorem \ref{Th0.1}, we restrict ourselves to the formal derivation of the key estimate \eqref{0.bsing} and the existence of a solution can be then obtained in a standard way, see \cite{EZ,MZ}. The derivation of this estimate is also similar to what we have done in the proof of Theorem \ref{0.1}, however, the weight function \eqref{0.5} is no longer appropriate and we should use more general weights $\phi_{\eb}(x)$ defined in \eqref{weight} with the parameter
$$
\gamma=3+\frac2{2\kappa-1},
$$
where $\kappa$ is the same as in assumption \eqref{f-st-sing}.
\par
Indeed, multiplying equation \eqref{CH} by $\phi_\eb\mu=\phi_{\eb,x_0}(x)\mu(t)$ where $\phi_{\eb,x_0}(x)=\phi_{\eb}(x-x_0)$ and $\phi_\eb$ is defined by \eqref{weight} (with the parameters $\eb$  being specified below), and arguing exactly as in the proof of Theorem \ref{Th0.1}
(as it is not difficult to see, Lemmas \ref{Lem0.1} and \ref{Lem0.2} remain true for the singular potentials, so no difference so far), we obtain the following analogue of estimate \eqref{0.better}:
\begin{multline}\label{0.better-sing}
\frac{d}{dt} E_{\phi_\eb}(t) + \beta\((|\nabla_x \mu|^2, \phi_\eb) + \| \nabla_x(\phi_\eb^{1/2} \mu) \|^2_{L^2} + (\phi_\eb^3, |f(u)|^6)^{\frac{1}{3}}\)\le\\
\le C\eb^2(\phi_\eb, |\nabla_x u|^2)  + 1)+C\eb^2(\phi_\eb^{1+2/\gamma},|f(u)|^2)+ C\eb^{-3}(\|g\|^6_{L^6_b}+1) ,
\end{multline}
where the weighted energy $E_{\phi_\eb}$ is defined by \eqref{0.wen} and satisfies \eqref{0.weq} (the exponent $7/5$ in the second term of the right-hand side of \eqref{0.better} is now replaced by $1+2/\gamma$ due to the choice of a different weight function, see  \eqref{0.6}).
\par
However, in order to estimate the second term in the right-hand side of \eqref{0.better-sing}, we now need to modify \eqref{0.interp} interpolating between $L^{1/\kappa}_{\phi_\eb}$ and $L^6_{\phi_\eb^3}$ (instead of $L^1_{\phi_\eb}$ and $L^6_{\phi_\eb^3}$). Namely, using the elementary fact that
$$
1+\frac2\gamma=\frac{4\kappa}{6\kappa-1}+3\cdot\frac{2\kappa-1}{6\kappa-1}
$$
together with the H\"older and Young inequalities, we see that
\begin{multline}\label{intnew}
C\eb^2(\phi_\eb^{1+2/\gamma},|f(u)|^2)=C\eb^2([\phi_\eb|f(u)|^{1/\kappa}]^{\frac{4\kappa}{6\kappa-1}},[\phi_\eb^3|f(u)|^6]^{\frac{2\kappa-1}{6\kappa-1}})
\le\\\le C\eb^2(\phi_\eb,|f(u)|^{\frac1\kappa})^{\frac{4\kappa}{6\kappa-1}}(\varphi_\eb^3,|f(u)|^6)^{\frac{2\kappa-1}{6\kappa-1}}=\\=
C\(\eb^{6\kappa-1}(\phi_\eb,|f(u)|^{\frac1\kappa})^{2\kappa}\)^{\frac2{6\kappa-1}}\((\varphi_\eb^3,|f(u)|^6)^{1/3}\)^{\frac{6\kappa-3}{6\kappa-1}}
\le\\\le C\eb^{6\kappa-1}(\phi_\eb,|f(u)|^{1/\kappa})^{2\kappa}+\beta(\varphi_\eb^3,|f(u)|^6)^{1/3}.
\end{multline}
Inserting this estimate into the right-hand side of \eqref{0.better-sing} and using \eqref{f-st-sing} and \eqref{0.weq}, we arrive at the following analogue of inequality \eqref{0.good}
\begin{equation}\label{0.ggood}
\frac d{dt}E_{\phi_\eb}(t)+\beta\|\Nx\mu\|_{L^2_{\phi_\eb}}^2\le C\eb^2 E_{\phi_\eb}(t)+C\eb^{6\kappa-1}[E_{\phi_\eb}(t)]^{2\kappa}+C\eb^{-3}(\|g\|_{L^6_b}^2+1).
\end{equation}
As in the proof of Theorem \ref{Th0.1}, this inequality implies the desired estimate \eqref{0.bsing}. Indeed, introducing $V_\eb(t):=E_{\phi_\eb}(t)$ and eliminating the first term in the right-hand side via the Young inequality, we end up with
$$
\frac d{dt}V_\eb+\beta\eb^{3}\|\Nx \mu\|^2_{L^2_{\phi_\eb}}\le \eb^2V_\eb^{2\kappa}+C(\|g\|_{L^6_b}^2+1),\ V_\eb(0)\le C(1+\|g\|_{L^6_b}^2+\|u_0\|_{\Phi_b}).
$$
As in the proof of Theorem \ref{Th0.1}, we conclude that
\begin{equation}\label{0.esttt}
V_\eb(t)\le 2C(T+1)(1+\|g\|^2_{L^6_b}+\|u_0\|_{\Phi_b}),\ \ t\in[0,T]
\end{equation}
if  $\eb=\eb(T,u_0,g)$ is fixed by
\begin{equation}\label{0.ebbbb}
\eb:=\frac1{[2(T+1)]^\kappa [C(1+\|g\|^2_{L^6_b}+\|u_0\|_{\Phi_b})]^{\kappa-1/2}}.
\end{equation}
Thus,
\begin{equation*}
E_{\phi_{\eb,x_0}}(T)\le \eb^{-3}V_\eb(T)\le C(T+1)^{3\kappa+1}(1+\|g\|^2_{L^6_b}+\|u_0\|_{\Phi_b})^{3\kappa-1/2}
\end{equation*}
and the desired estimate \eqref{0.bsing} follows now by applying the supremum over $x_0\in\R^3$ and using \eqref{2.w-un}. Theorem \ref{Th.critical} is proved.
\end{proof}

The next theorem gives the analogue of Theorem \ref{Th.critical} for the Cahn-Hilliard-Oono equation with singular potentials.

\begin{theorem}\label{Th.chos} Let the assumptions \eqref{fsing} and \eqref{f-st-sing} hold and $g\in L^6_b(\R^3)$. Then, for every $u_0\in\Phi_b$, the Cahn-Hilliard-Oono equation \eqref{CHd} possesses at least one global solution $u(t)$, $t\ge0$, (in the sense of \eqref{0.3} plus the extra assumption \eqref{using}) which satisfies the following analogue of \eqref{0.b}:
\begin{equation}\label{0.bbsing}
\|u(t)\|_{W^{1,2}_b}^2+\|F(u(t))\|_{L^1_b}+\|\Nx\mu\|_{L^2_b([t,t+1]\times\R^3)}^2\le Q(\|u_0\|_{\Phi_b})e^{-\sigma t}+Q(\|g\|_{L^6_b}),
\end{equation}
where the monotone increasing function $Q$ and positive constant $\sigma$ are independent of $u_0$ and $t$.
\end{theorem}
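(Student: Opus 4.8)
The plan is to combine the two techniques already developed: the dissipative mechanism from the Cahn-Hilliard-Oono equation (Theorem \ref{Th0.1d}) and the interpolation argument tailored to strong singular potentials (Theorem \ref{Th.critical}). First I would multiply equation \eqref{CHd} by $\phi_\eb\mu=\phi_{\eb(t),x_0}\mu$ with the time-dependent parameter $\eb(t)$ and the general polynomial weight of exponent $\gamma=3+\frac2{2\kappa-1}$, exactly as in Theorem \ref{Th.critical}. This produces the analogue of \eqref{0.7dd} with the same two extra terms coming from the dissipative term $-\lambda u$ and from $\Dt\phi_\eb$. Since Lemmas \ref{Lem0.1} and \ref{Lem0.2} remain valid for singular potentials (they only use monotonicity of $f_0$ and the weight estimate \eqref{0.6}, not the growth condition \eqref{0.1}.3), the estimate \eqref{extra} for the extra terms goes through verbatim provided the constraint \eqref{ebsprime}, namely $5|\eb'(t)|/\eb(t)\le\lambda/2$, is imposed.

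Next I would carry out the singular-potential interpolation \eqref{intnew}, interpolating between $L^{1/\kappa}_{\phi_\eb}$ and $L^6_{\phi_\eb^3}$ and using \eqref{f-st-sing}, to absorb the term $C\eb^2(\phi_\eb^{1+2/\gamma},|f(u)|^2)$ into the sextic dissipation and a power of $E_{\phi_\eb}$. Combined with the dissipative contribution $-\frac\lambda2 E_{\phi_\eb}$, this yields the dissipative analogue of \eqref{0.ggood}, namely
\begin{equation*}
\frac d{dt}E_{\phi_\eb}(t)+\frac\lambda2 E_{\phi_\eb}(t)+\beta\|\Nx\mu\|^2_{L^2_{\phi_\eb}}\le C\eb^2 E_{\phi_\eb}+C\eb^{6\kappa-1}[E_{\phi_\eb}]^{2\kappa}+C\eb^{-3}(\|g\|^2_{L^6_b}+1).
\end{equation*}
Writing $V_\eb:=E_{\phi_\eb}$ and eliminating the linear term by Young's inequality, I would obtain a scalar differential inequality of the form $\frac d{dt}V_\eb+\frac\lambda4 V_\eb\le V_\eb(\eb^2 V_\eb^{2\kappa-1}-\frac\lambda4)+C_g$, the direct singular analogue of \eqref{0.lastdd}.

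The core of the argument is then to choose $\eb(t)$ so that the bad term is nonpositive for all $t\ge0$. Mimicking \eqref{epsdef.2}, I would set $\eb(t)$ to decay so that $\eb^2(t)V_\eb(t)^{2\kappa-1}\le\lambda/4$ self-consistently, which reduces to requiring $\eb^{2}(t)\bigl(\tfrac{4C_g}\lambda+V_0e^{-\sigma t}\bigr)^{2\kappa-1}\le\lambda/4$; an explicit choice is $\eb(t)=\eb_0\bigl((\lambda/4)\big/(\tfrac{4C_g}\lambda+V_0e^{-\sigma t})^{2\kappa-1}\bigr)^{1/2}$. One then checks, as in \eqref{epsdef.3}, that the logarithmic derivative $|\eb'(t)|/\eb(t)$ stays bounded by a multiple of $\sigma$, so that \eqref{ebsprime} holds for $\sigma$ small. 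With the bad term suppressed, $V_\eb(t)\le\frac{4C_g}\lambda+V_0e^{-\lambda t/4}$, and passing back through $E_{\phi_\eb}\le\eb^{-3}V_\eb$ and the supremum over $x_0$ via \eqref{2.w-un} gives the dissipative bound \eqref{0.bbsing}. I expect the main obstacle to be bookkeeping the interaction between the exponent $2\kappa$ and the self-consistency condition \eqref{0.diss}: because the nonlinear power is now $2\kappa>2$ rather than quadratic, the admissible decay rate of $\eb(t)$ and the resulting constraint on $\sigma$ must be verified to be mutually compatible (i.e. that a positive $\sigma$ still satisfies both \eqref{ebsprime} and the analogue of \eqref{epsdef.1}), which is where the singular case genuinely differs from Theorem \ref{Th0.1d}.
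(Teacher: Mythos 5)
Your proposal is correct and follows exactly the route the paper intends: it is the word-for-word combination of the proof of Theorem \ref{Th0.1d} (time-dependent $\eb(t)$, constraint \eqref{ebsprime}, extra terms handled as in \eqref{extra}) with the weight $\gamma=3+\frac2{2\kappa-1}$ and the interpolation \eqref{intnew} from Theorem \ref{Th.critical}, which is precisely what the paper's one-paragraph proof sketch prescribes. Your added bookkeeping of the self-consistency condition $\eb^2 V_\eb^{2\kappa-1}\le\lambda/4$ and the resulting constraint $\sigma\lesssim\lambda/(2\kappa-1)$ correctly fills in the only detail the paper omits.
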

The proof of this theorem repeats almost word by word the proof of Theorem \ref{Th0.1d}. The only difference is that we should use $\gamma=3+\frac2{2\kappa-1}$ instead of $\gamma=5$ in the definition of the weight function $\phi_{\eb(t)}(x)$ and use the refined interpolation inequality \eqref{intnew} instead of \eqref{0.interp}. For this reason, we do not present it here.

\begin{remark}\label{Rem.un-sing} The uniqueness Theorem \ref{Th1.unique}  can be also extended to the singular case. However, this requires to control the derivative $f'(u)$ through $f(u)$ or $F(u)$ and assumptions \eqref{1.psi} are again not compatible with singular potentials and must be modified. For instance, if we assume that
\begin{equation}\label{singgg}
|f'(u)|\le [\Psi(u)]^{\kappa_1},\ \ \Psi(u)\le C_1f(u)+C_2,\ \ \kappa_1<8/5
\end{equation}
for some convex function $\Psi$ and positive $C_1$ and $C_2$, then arguing as in \cite{EKZ} (see Theorem 3.4), we may establish the uniqueness as well as the further regularity of a solution and verify, in particular, that the solution $u$ becomes separated from singularities for positive times ($\|u(t)\|_{L^\infty}\le1-\delta$, for some $\delta>0$). After that the further investigation of the problem can be constructed exactly as for the case of regular potentials.
\par
Note also that condition \eqref{singgg} is {\it stronger} than  \eqref{f-st-sing} which we need for the global existence of a solution. In particular, for the nonlinearities \eqref{u.pk}, we need $k>5/3$ (instead of $k>1$).
\end{remark}

\end{document}